\pgfplotsset{compat=newest}
\newtheorem{lemma}{Lemma}[section]
\newtheorem{theorem}[lemma]{Theorem}
\newtheorem{definition}[lemma]{Definition}
\newtheorem{example1}[lemma]{Example}
\newtheorem{rem1}[lemma]{Remark}
\newtheorem{assumption}[lemma]{Assumption}
\newtheorem{alg1}[lemma]{Algorithm}
\newtheorem{me1}[lemma]{Mechanism}
\newenvironment{remark}{\begin{rem1}\rm}{\end{rem1}}
\newenvironment{example}{\begin{example1}\rm}{\end{example1}}
\newenvironment{alg}{\begin{alg1}\rm}{\end{alg1}}
\newcommand{\bbr}{\mathbb{R}}
\newcommand{\R}{\mathbb{R}}
\newcommand{\N}{\mathbb{N}}
\newcommand{\bbx}{\mathbb{X}}
\newcommand{\xcal}{\mathcal{X}}
\newcommand{\vmin}{\min}
\DeclareMathOperator*{\argmin}{argMin}
\newcommand{\NE}{\operatorname{NE}}
\newcommand{\eNE}{\epsilon\NE}
\newcommand{\cl}{\operatorname{cl}}
\renewcommand{\int}{\operatorname{int}}
\DeclareMathOperator*{\argm}{arg\,min}
\DeclareMathOperator{\conv}{conv}
\DeclareMathOperator{\cone}{cone}
\newcommand{\norm}[1]{\left\Vert #1\right\Vert}
\begin{document}

\title{Approximating the set of Nash equilibria for convex games}
\author{Zachary Feinstein \thanks{Stevens Institute of Technology, School of Business, Hoboken, NJ 07030, USA, zfeinste@stevens.edu.} \and Niklas Hey\thanks{Vienna University of Economics and Business, Institute for Statistics and Mathematics, Vienna A-1020, AUT, nihey@wu.ac.at.} \and Birgit Rudloff \thanks{Vienna University of Economics and Business, Institute for Statistics and Mathematics, Vienna A-1020, AUT, brudloff@wu.ac.at.}}
\maketitle

\begin{abstract}
In~\cite{FR23}, it was shown that the set of Nash equilibria for any non-cooperative $N$ player game coincides with the set of Pareto optimal points of a certain vector optimization problem with non-convex ordering cone. To avoid dealing with a non-convex ordering cone, an equivalent characterization of the set of Nash equilibria as the intersection of the Pareto optimal points of $N$ multi-objective problems (i.e.\ with the natural ordering cone) is proven.
So far, algorithms to compute the exact set of Pareto optimal points of a multi-objective problem exist only for the class of linear problems, which reduces the possibility of finding the true set of Nash equilibria by those algorithms to linear games only. 

In this paper, we will consider the larger class of convex games.
As, typically, only approximate solutions can be computed for convex vector optimization problems, we first show, in total analogy to the result above, that the set of $\epsilon$-approximate Nash equilibria can be characterized by the intersection of $\epsilon$-approximate Pareto optimal points for $N$ convex multi-objective problems. 
Then, we propose an algorithm based on results from vector optimization and convex projections that allows for the computation of a set that, on one hand, contains the set of all true Nash equilibria, and is, on the other hand, contained in the set of $\epsilon$-approximate Nash equilibria. 
In addition to the joint convexity of the cost function for each player, this algorithm works provided the players are restricted by either shared polyhedral constraints or independent convex constraints.
\end{abstract}

\section{Introduction}
The concept of a {\em{Nash equilibrium}} is an important concept in game theory that was first introduced by John Nash in his works \cite{nash1950,nash1951}. Considering a non-cooperative game  with $N$ players ($N \geq 2$), where each player $i$ tries to minimize her individual cost function $f_i$, the Nash equilibrium describes a joint strategy of all players at which each player $i$ cannot reduce her cost $f_i$ assuming the strategies of the other players remain fixed. Hence, the Nash equilibrium provides stability in a non-cooperative game setting.

In this paper, we will mainly focus on convex games with a shared constraint set $\bbx$. 
Usually, when  considering convex games, the focus is on providing the existence of a unique equilibrium point and developing methods for finding this particular equilibrium, see e.g.~\cite{rosen65}, where additional strong convexity conditions are assumed to guarantee uniqueness of the Nash equilibrium. In this paper, we will consider convex games without additional assumptions, hence allowing for games with a unique, several, or infinitely many equilibria. Our aim is to approximate the set of Nash equilibria for any desired error bound $\epsilon>0$. 

It is well known that a Nash equilibrium does not need to be Pareto optimal and a Pareto optimal point does not need to be a Nash equilibrium (when considering the corresponding na\"ive Pareto optimization problem, see e.g.~\cite{FR23} for details). However, in~\cite{FR23}, it was shown that the set of Nash equilibria can be equivalently characterized by the set of Pareto optimal points of a specific vector optimization problem with a non-convex ordering cone, or equivalently, by the intersection of the Pareto optimal points of $N$ specific multi-objective problems. This result holds true in general for any possible non-cooperative game without making assumptions on cost functions or constraint sets. In order to use this result for numerical computations, one would need to compute the set of Pareto optimal points for multi-objective problems. So far, algorithms that provide the set of Pareto optimal points exist only for the special case of linear multi-objective optimization, see e.g.~\cite{Armand93,VanTu17,tohidi2018adjacency}. This restricts computational methods of finding the set of Nash equilibria via that characterization to linear games so far.

The goal of this paper is to introduce a method which approximates the set of all Nash equilibria of a convex game. Hence, $\epsilon$-approximate solution concepts are considered for both, Nash equilibria and Pareto optimality. Similar to the characterizations proven in \cite{FR23}, the set of $\epsilon$-Nash equilibria for any possible $N$-player game can be characterized by the intersection of $\epsilon$-Pareto optimal points of $N$ multi-objective problems for any $\epsilon>0$. For convex games these multi-objective problems are convex. In general, the set of Pareto optimal points as well as the set of $\epsilon$-Pareto optimal points are not finitely generated and thus cannot be computed exactly. However, due to the Lipschitz continuity of the convex cost functions and by making additional assumptions on the structure of the convex constraint set, we will (for each of the $N$ specific convex problems) be able to compute a finitely generated set which, on one hand, contains all Pareto optimal points and, on the other hand, is a subset of the $\epsilon$-Pareto optimal points for this problem for some specific $\epsilon>0$. As a consequence, by taking the intersections over these $N$ sets yields a finitely generated set $X$ which contains the set of true Nash equilibria $\NE(f,\mathbb X)$ for the convex game while being contained in the set of $\epsilon$-approximate Nash equilibria $\eNE(f,\mathbb X)$, i.e.
\begin{align*}
    \NE(f,\mathbb X) \subseteq X  \subseteq \eNE(f,\mathbb X).
\end{align*}
This result provides some advantages: It is guaranteed that each element of interest, namely each equilibrium point, is contained in the computed set. Furthermore, 
each element in the found set is guaranteed to be at least almost at equilibrium, i.e.\ deviating at most by $\epsilon$ from the cost provided by a true equilibrium point. 
Additionally to the Lipschitz continuity for the jointly convex cost functions, we distinguish between two different assumptions on the structure of the shared constraint set: In the first case we assume that the shared constraint set is a polytope. In the second case we consider for each player $i$ a convex constraint set that is independent from the constraints of the other players. For both cases, the proposed algorithm is proven to work correctly.  Computational examples are provided which illustrate the sandwich result.

\section{Definitions}
Let us consider the following non-cooperative shared constraint games for $N \geq 2$ players. Each player $i$ (for $i = 1,...,N$) considers strategies in the linear space $\xcal_i= \R^{n_i}$, where $n_i\in\N$, and where we do not impose any condition that $\xcal_i$ and $\xcal_j$ are equal.
Furthermore, each player $i$ has a cost function $f_i: \prod_{j = 1}^N \xcal_j \to \bbr$ she seeks to minimize.  This cost $f_i(x)$ for $x \in \prod_{j = 1}^N \xcal_j$ may depend on player $i$'s strategy $x_i \in \xcal_i$ as well as the strategy chosen by all other players $x_{-i} \in \prod_{j \neq i} \xcal_j$. The vector of cost functions is denoted by $f=(f_1,...,f_N)$ with $f: \prod_{i = 1}^N \xcal_i \to \R^N$.
Assume further a shared constraint set $\bbx \subseteq \prod_{j = 1}^N \xcal_j$ for the joint strategy $x \in \prod_{j = 1}^N \xcal_j$ of all players.  This shared constraint condition is common in the literature (see, e.g.,~\cite{rosen65,facchinei2007generalized,Kulkarni2017}). In a non-cooperative shared constraint game each player $i$ minimizes her cost function given all other players fix their strategies $x_{-i}^* \in \prod_{j \neq i} \xcal_j$. That is, for all $i = 1,...,N$ the following optimization problem is considered
\begin{equation}\label{eq:game}
x_i^* \in \argm\{f_i(x_i,x_{-i}^*) \; | \; (x_i,x_{-i}^*) \in \bbx\}.
\end{equation}
Non-cooperative shared constraint games are a special type of generalized games as not only the cost function, but also the constraint set of player $i$ in optimization problem \eqref{eq:game} can depend on the strategy $x_{-i}^*$ of all other players. In the notation of \cite{Kulkarni2017}, in which such problems are explicitly called ``generalized Nash games with shared constraints'', the constraint for player $i$ can be denoted by $K_i(x_{-i}^*) := \{x_i \in \xcal_i \; | \; (x_i,x_{-i}^*) \in \bbx\}$ so that problem \eqref{eq:game} can be written as $x_i^* \in \operatorname{argmin}\{f_i(x_i,x_{-i}^*) \; | \; x_i \in K_i(x_{-i}^*)\}$. Shared constraints can be used e.g. for system-wide constraints, for details see~\cite{Kulkarni2017}. Notably, shared constraint games encompass classical games in which the players interact only through the cost functions $f_i$. In our notation, a classical game can be encoded by the box-type shared constraint $\bbx = \prod_{j = 1}^N \bbx_i$ for $\bbx_i \subseteq \xcal_i$ for every player $i$.

\begin{definition}\label{defn:nash-shared} 
 A joint strategy $x^* \in \bbx$ satisfying \eqref{eq:game} for all $i = 1,...,N$ is called a \textbf{\emph{Nash equilibrium}}.  Thus, $x^* \in \bbx$ is a Nash equilibrium if, for any player $i$, 
$f_i(x_i,x_{-i}^*) \geq f_i(x^*)$ for all strategies $x_i \in \xcal_i$ with $(x_i,x_{-i}^*) \in \bbx$.  The set of all Nash equilibria is denoted by $\NE(f,\bbx)$.
\end{definition}
When numerical methods are used, one  has to deal with approximation errors. Therefore, we will now introduce the notion of approximate Nash equilibria. In a Nash equilibrium, no player has an incentive to change his behavior. An approximate Nash equilibrium allows the possibility that a player may have a small incentive to deviate.
They are also needed for, e.g., sensitivity analysis of Nash equilibria, see~\cite{feinstein2022}, or can be interpreted as Nash equilibria to some perturbed preferences~\cite{JRT2012}.

\begin{definition}\label{defn:nash-approx} \cite[Section~2.6.6]{nisan2007algorithmic}
Let $\epsilon>0$. The joint strategy $x^* \in \bbx$ is called an $\bm{\epsilon}$-\textbf{\emph{approximate Nash equilibrium}} for the game if, for any player $i$ it holds $f_i(x_i,x_{-i}^*) + \epsilon \geq f_i(x^*)$ for any $x_i \in \xcal_i$ such that $(x_i,x_{-i}^*) \in \bbx$.
The set of all $\epsilon$-approximate Nash equilibria will be denoted by $\eNE(f,\bbx)$.  That is, 
\[\eNE(f,\bbx) := \{x^* \in \bbx \; | \; \forall i: \; f_i(x^*)\leq\inf\{f_i(x_i,x_{-i}^*) \; | \; (x_i,x_{-i}^*) \in \bbx\} + \epsilon\}.\]
\end{definition}

Let us now introduce the notion of Pareto optimality from multi-objective optimization, which also plays a fundamental role in economics as it is related to the interpersonal incomparability of utility.
Consider a linear space $\xcal$ as well as the space $\R^m$ with the natural ordering cone $\R^m_+$. Recall that a set $C \subseteq \R^m$ is called a cone if $\alpha C\subseteq C$ for all $\alpha \geq 0$. The convex cone $\R^m_+$ introduces 
a partial order  
$\leq$ on $ \R^m$ via
$
	x\leq y \iff y-x\in \R^m_+,
$
for $x,y\in  \R^m$. 
A multi-objective optimization problem is a problem of the form 
\begin{equation}\label{eq:VOPg}
	\vmin \{g(x) \; | \; x \in \bbx\} 
\end{equation}
 for some feasible region $\bbx \subseteq \xcal$, a vector function $g: \bbx \to \R^m$ and ordering cone $\R^m_+$. 
To minimize this vector-valued function, and thus to solve the multi-objective optimization problem~\eqref{eq:VOPg}, means to compute the set of minimizers, also called Pareto optimal points, or efficient points, which are defined as follows.  
  \begin{definition}{\cite{GJN06}}\label{defn:pareto}
  An element $x^* \in \mathbb X$ is called \textbf{Pareto optimal} for problem~\eqref{eq:VOPg} if 
\[
	\big(g(x^*) - \R^m_+\setminus{\{0\}}\big)\cap g[\bbx] =\emptyset. 
\]
The set of all Pareto optimal points of problem~\eqref{eq:VOPg} is denoted by 
\[
	\argmin \{g(x) \; | \; x \in \bbx\}.
\] 
\end{definition}
We use the notation $g[\bbx]:=\{g(x) \; | \; x\in \bbx\}  \subseteq \R^m$ for the image of the feasible set $\bbx$. Note that the set $\argmin \{g(x) \; | \; x \in \bbx\}$ can equivalently be characterized as the set of feasible points $x^*$ that map to minimal elements of $g[\bbx]$. That is, $x^* \in \bbx$ is Pareto optimal if, whenever $g(x) \leq g(x^*)$ for some $x \in \bbx$, then $g(x^*) \leq g(x)$ holds, see~\cite[Def.\ 3.1(c), Def.\ 4.1(a) and Def.\ 7.1(a)]{Jahn11}.  

Denote by $\mathcal P:=\cl (g[\bbx] +\R^m_+)$ the upper image of problem~\eqref{eq:VOPg}. The boundary of the set $\mathcal P$ contains the so called efficient frontier, which is the collection of functional values $g(x^*)$ of all Pareto optimal points $x^*$ of problem~\eqref{eq:VOPg}. Numerical methods typically only compute approximations of $\mathcal P$, except for the case of linear multi-objective optimization problems, where the set $\mathcal P$ as well as the set of all Pareto optimal points can be computed exactly.
This can be seen e.g. for the case of convex problems, where the algorithms in~\cite{LRU14} result in a polyhedral inner approximation of $\mathcal P$ for a given error level $\epsilon>0$ and a fixed direction $c \in \R^m_+\setminus{\{0\}}$. This motivated the following definition of $\epsilon$-Pareto optimal points, which are feasible points $x^* \in \mathbb X$ whose image values $g(x^*)$ are only in $\epsilon$-distance (in direction $c$) to the efficient frontier.

\begin{definition}{\cite{K79}\cite[Def.~2.2]{GJN06}}
\label{def:Pareto-c-solution}
Let $c \in \R^m_+\setminus{\{0\}}$. An element $x^* \in \mathbb X$ is called $\bm{\epsilon}$-\textbf{Pareto optimal} with respect to $c$ for problem~\eqref{eq:VOPg} if 
\[
	\big(g(x^*)- \epsilon c - \R^m_+\setminus{\{0\}}\big)\cap g[\bbx] =\emptyset. 
\]

\end{definition}
Definition \ref{def:Pareto-c-solution} holds for an arbitrary choice of direction $c \in \R^m_+\setminus{\{0\}}$. However, as we will see in the following section, for a specific vector optimization problem a connection between the set of $\epsilon$-Pareto optimal points and the set $\eNE(f,\bbx)$ for some $N$-player game can be explicitly stated for the fixed direction $\Bar{c}:=(0,...,0,1)^\top \in \R^m_+\setminus{\{0\}}$. Therefore we will, for the remainder of this paper, consider the fixed direction $\Bar{c}$ and denote the set of all $\epsilon$-Pareto optimal points with respect to $\Bar{c}$ for problem~\eqref{eq:VOPg} by 
\[
	\epsilon\argmin \{g(x) \; | \; x \in \bbx\} .
\] 
Often, the notion of weakly Pareto optimal points and weakly $\epsilon$-Pareto optimal points are important.
\begin{definition}{\cite{GJN06}}
\label{def:wPareto-c-solution}
$x^* \in \mathbb X$ is called \textbf{weakly Pareto optimal} for problem~\eqref{eq:VOPg} if 
$
	\big(g(x^*) - \int\R^m_+\big)\cap g[\bbx] =\emptyset. 
$
Let $c \in \R^m_+\setminus{\{0\}}$. An element $x^* \in \mathbb X$ is called weakly $\epsilon$-Pareto optimal with respect to $c$ for problem~\eqref{eq:VOPg} if 
$
	\big(g(x^*)- \epsilon c - \int\R^m_+\big)\cap g[\bbx] =\emptyset. 
$
\end{definition}

\section{Equivalence of Nash and Pareto and their approximations}
Consider now for each player $i$ the following convex multi-objective optimization problem
\begin{equation}\label{eq:vop}
\min\{(x_{-i},-x_{-i},f_i(x)) \; | \; x \in \bbx\}.
\end{equation}
That is, we consider problem~\eqref{eq:VOPg} with objective function $g(x)=(x_{-i},-x_{-i},f_i(x))$.
Note that the ordering cone of problem~\eqref{eq:vop} is $\R^{m_i}_+$ with $m_i=2 \sum_{j=1,\\ j \neq i}^N n_j +1$. This objective space dimension can be further reduced by considering the equivalent convex multi-objective optimization problem
\begin{equation}\label{eq:vop2}
\min\left\{\left(x_{-i},-\sum_{j=1,\\ j \neq i}^N \sum_{k = 1}^{n_j} x_{jk},f_i(x)\right) \; \bigg\vert  \; x \in \bbx \right\}
\end{equation}
with natural ordering cone in dimension $\sum_{j=1,\\ j \neq i}^N n_j+2$, where $x_{jk}$ denotes the $k$-th element of the vector $x_j$, for $j\in\{1,...,N\}$ and $k\in\{1,...,n_j\}$. Hence, $s:=-\sum_{j=1,\\ j \neq i}^N \sum_{k = 1}^{n_j} x_{jk}$ is the sum over all components of the vector $-x_{-i}$. The equivalence holds as minimizing component-wise the vector $(x_{-i},-x_{-i})$ gives the same minimizers (being the set of all $x_{-i}$) as minimizing the vector $(x_{-i},s)$. This dimension reduction is important for computational aspects and should be kept in mind. For purely aesthetical reasons and since it simplifies explanations we will work with problem~\eqref{eq:vop} instead of problem~\eqref{eq:vop2} in our exposition. A further dimension reduction by using a non-standard ordering cone is discussed in Remark~\ref{rem:dim_red}.

Note that minimizing the multi-objective function $(x_{-i},-x_{-i},f_i(x))$ with respect to the natural ordering cone in~\eqref{eq:vop} fixes, for player $i$, the strategy $x_{-i}$ of the other players while minimizing his objective function $f_i(x)$. This clearly corresponds to solving the optimization problem~\eqref{eq:game} for any given strategy $x_{-i}=x_{-i}^*$. Doing that for each player, i.e.\ taking the intersection of the Pareto optimal points of each problem~\eqref{eq:vop} over all players computes the fixed points. The following theorem is immediate.
\begin{theorem}{\cite{FR23}}\label{thm:nash}
The set of Nash equilibria of any non-cooperative game~\eqref{eq:game} coincides with the intersection of the Pareto optimal points of problems~\eqref{eq:vop} over all $i\in\{1,...,N\}$, i.e.,
\begin{align}\label{eq:shared}
\NE(f,\bbx) = \bigcap_{i = 1}^N \argmin\{(x_{-i},-x_{-i},f_i(x)) \; | \; x \in \bbx\}.
\end{align}
\end{theorem}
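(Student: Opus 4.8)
The plan is to prove the set equality by establishing, for each fixed player $i$, that the Pareto optimal points of problem~\eqref{eq:vop} coincide exactly with the strategy profiles $x^*$ satisfying the single-player optimality condition~\eqref{eq:game}. Once this player-by-player equivalence is in hand, intersecting over all $i \in \{1,\dots,N\}$ yields precisely the profiles that satisfy~\eqref{eq:game} simultaneously for every player, which by Definition~\ref{defn:nash-shared} is exactly $\NE(f,\bbx)$.

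The key step is to exploit the redundant pair of blocks $(x_{-i},-x_{-i})$ in the objective $g(x)=(x_{-i},-x_{-i},f_i(x))$. I would use the equivalent ``minimal element'' characterization of Pareto optimality recalled right after Definition~\ref{defn:pareto}: $x^* \in \bbx$ is Pareto optimal if and only if $g(x)\leq g(x^*)$ for some $x \in \bbx$ implies $g(x^*)\leq g(x)$. The decisive observation is that the natural-order inequality $g(x)\leq g(x^*)$ demands simultaneously $x_{-i}\leq x_{-i}^*$ (from the first block) and $-x_{-i}\leq -x_{-i}^*$, i.e.\ $x_{-i}^*\leq x_{-i}$ (from the second block). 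Together these force $x_{-i}=x_{-i}^*$, so the only feasible points comparable to $x^*$ in the order are those lying on the fibre where the other players' strategies are frozen at $x_{-i}^*$.

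Given $x_{-i}=x_{-i}^*$, the first two blocks of $g(x)$ and $g(x^*)$ agree, so $g(x)\leq g(x^*)$ reduces to $f_i(x_i,x_{-i}^*)\leq f_i(x^*)$, while the converse $g(x^*)\leq g(x)$ reduces to $f_i(x^*)\leq f_i(x_i,x_{-i}^*)$. Hence the Pareto condition becomes: every feasible $x_i$ with $f_i(x_i,x_{-i}^*)\leq f_i(x^*)$ in fact attains $f_i(x_i,x_{-i}^*)=f_i(x^*)$; equivalently, no feasible $x_i$ gives $f_i(x_i,x_{-i}^*)<f_i(x^*)$. This is precisely the statement that $x_i^*$ minimizes $f_i(\,\cdot\,,x_{-i}^*)$ over $\{x_i \mid (x_i,x_{-i}^*)\in\bbx\}$, i.e.\ condition~\eqref{eq:game} for player $i$.

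I do not anticipate a genuine obstacle; the result is essentially definitional once the $(x_{-i},-x_{-i})$ trick is exposed. The only point demanding some care is the componentwise-order bookkeeping: one must verify both that $g(x)\leq g(x^*)$ truly forces full equality of the entire $x_{-i}$ block (no strict improvement in one coordinate can be traded against another, since $x_{-i}$ and $-x_{-i}$ both appear), and that the reduction of the Pareto statement to~\eqref{eq:game} is an equivalence in both directions. With these checks complete, taking intersections over $i$ delivers~\eqref{eq:shared}.
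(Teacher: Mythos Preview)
Your proposal is correct and follows essentially the same approach as the paper. The paper does not give a formal proof of Theorem~\ref{thm:nash} (it cites~\cite{FR23}), but the intuitive explanation preceding the theorem and the proof of the $\epsilon$-version in Theorem~\ref{thm:nash-approx} rest on exactly your observation: the pair $(x_{-i},-x_{-i})$ in the objective forces $x_{-i}=x_{-i}^*$ under the componentwise order, reducing Pareto optimality of~\eqref{eq:vop} to the single-player condition~\eqref{eq:game}.
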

The above theorem was given in~\cite{FR23} in a more general framework and in a different formulation using a non-convex ordering cone (see \cite[Th.~2.6, Cor.~2.8]{FR23}). We provide here the above intuitive and simplified formulation that was already mentioned in Remark~3.5 in~\cite{FR23} for the linear case, but holds of course also in general.
A short remark connecting the algorithm proposed in this paper to the problem formulation using the non-convex ordering cone is given in Remark~\ref{rem:dim_red}.

In the case of linear games the set of all Pareto optimal points of problem~\eqref{eq:vop} can be computed exactly and Theorem~\ref{thm:nash} can be used to numerically compute the set of all Nash equilibria of such games, see~\cite{FR23}. If the game is not linear, approximations need to be considered. In the following, we will therefore relate the set of $\epsilon$-approximate Nash equilibria with the $\epsilon$-Pareto optimal points of problem~\eqref{eq:vop}. To do so we will fix the directions $\bar{c}_i=(0,...,0,1)^\top\in  \R^{m_i}_+\setminus{\{0\}}$ for all $i\in\{1,...,N\}$. The choice of this direction ensures that no $\epsilon$-deviation is allowed in the other player's strategies, so for each player $i$ the strategy $x_{-i}$ of the other players stays fixed, while an $\epsilon$-deviation is allowed for the objective $f_i(x)$ of player $i$. 
\begin{theorem}\label{thm:nash-approx}
The set of $\epsilon$-approximate Nash equilibria of any non-cooperative game~\eqref{eq:game} coincides with the intersection of the $\epsilon$-Pareto optimal points of problems~\eqref{eq:vop} for direction $\bar{c}_i=(0,...,0,1)^\top$ over all $i\in\{1,...,N\}$, i.e.,
\begin{align}\label{eq:shared_eps}
\eNE(f,\bbx) = \bigcap_{i = 1}^N \epsilon\argmin\{(x_{-i},-x_{-i},f_i(x)) \; | \; x \in \bbx\}.
\end{align}
\end{theorem}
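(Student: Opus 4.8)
The plan is to prove the set equality~\eqref{eq:shared_eps} by unwinding both sides into their defining inequalities and showing they coincide pointwise, in direct analogy to the argument for Theorem~\ref{thm:nash}. Fix $\epsilon>0$ and a joint strategy $x^*\in\bbx$. The strategy is to characterize membership in each set $\epsilon\argmin\{(x_{-i},-x_{-i},f_i(x)) \mid x\in\bbx\}$ on the right-hand side and match it to the per-player condition defining $\eNE(f,\bbx)$ in Definition~\ref{defn:nash-approx}. Because the intersection on the right is over the $N$ individual $\epsilon$-Pareto sets, and membership in $\eNE(f,\bbx)$ is a conjunction over the $N$ players, it suffices to show for each fixed $i$ that
\begin{equation*}
x^* \in \epsilon\argmin\{(x_{-i},-x_{-i},f_i(x)) \mid x\in\bbx\}
\iff
f_i(x^*)\leq \inf\{f_i(x_i,x_{-i}^*) \mid (x_i,x_{-i}^*)\in\bbx\}+\epsilon.
\end{equation*}

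The core of the argument is to expand Definition~\ref{def:Pareto-c-solution} for the specific objective $g(x)=(x_{-i},-x_{-i},f_i(x))$ and the fixed direction $\bar c_i=(0,\dots,0,1)^\top$. By definition, $x^*$ is $\epsilon$-Pareto optimal with respect to $\bar c_i$ exactly when there is no $x\in\bbx$ with $g(x)\in g(x^*)-\epsilon\bar c_i-(\R^{m_i}_+\setminus\{0\})$. First I would read off what this emptiness condition means coordinatewise. The key observation is that since $\bar c_i$ has zero entries in all the $x_{-i}$ and $-x_{-i}$ coordinates, the $\epsilon$-shift affects only the last coordinate $f_i$. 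Consequently the only feasible points $x$ that can possibly dominate $x^*$ in the required sense are those whose first $m_i-1$ coordinates match those of $g(x^*)$, i.e.\ those with $x_{-i}=x_{-i}^*$; any deviation in $x_{-i}$ forces a strict increase in some component of either $x_{-i}$ or $-x_{-i}$, which violates the $\leq$ ordering and takes the point outside the relevant cone translate. This is the step that pins the other players' strategies at $x_{-i}^*$, and it is the same mechanism that makes Theorem~\ref{thm:nash} work; I expect this coordinate bookkeeping to be the main obstacle, as one must argue carefully that requiring $y-g(x)\in\R^{m_i}_+\setminus\{0\}$ together with the $(x_{-i},-x_{-i})$ block forces equality of that block.

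Having reduced the condition to points with $x_{-i}=x_{-i}^*$, the emptiness of the intersection becomes the statement that there is no $x_i$ with $(x_i,x_{-i}^*)\in\bbx$ and $f_i(x_i,x_{-i}^*)<f_i(x^*)-\epsilon$ (the strict inequality coming from the $\setminus\{0\}$ in the cone, since the $(x_{-i},-x_{-i})$ block is already forced to equality so the nonzero slack must occur in the last coordinate). Negating, this is precisely $f_i(x^*)-\epsilon\leq \inf\{f_i(x_i,x_{-i}^*) \mid (x_i,x_{-i}^*)\in\bbx\}$, which rearranges to the $\epsilon$-approximate best-response inequality in Definition~\ref{defn:nash-approx}. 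A minor technical point to handle is the passage between the strict inequality $f_i<f_i(x^*)-\epsilon$ for all feasible $x_i$ and the non-strict infimum bound $f_i(x^*)\le\inf(\cdots)+\epsilon$; these are equivalent because ``no value strictly below a threshold $t$'' is exactly ``$t\le\inf$''.

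Finally I would assemble the pieces: intersecting the per-player equivalences over $i=1,\dots,N$ shows $x^*$ lies in every $\epsilon\argmin$ set if and only if the $\epsilon$-approximate best-response inequality holds for every player $i$, which is the definition of $x^*\in\eNE(f,\bbx)$. Since $x^*\in\bbx$ was arbitrary, the two sets are equal, establishing~\eqref{eq:shared_eps}. I would note that setting $\epsilon=0$ recovers Theorem~\ref{thm:nash}, so the proof is a genuine quantitative refinement rather than a new idea; the only substantive new ingredient over the $\epsilon=0$ case is tracking the $\epsilon\bar c_i$ shift and verifying it lands solely in the $f_i$ coordinate.
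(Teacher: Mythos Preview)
Your proposal is correct and follows essentially the same approach as the paper's own proof: unwind Definition~\ref{def:Pareto-c-solution} for the objective $(x_{-i},-x_{-i},f_i(x))$ with direction $\bar c_i$, observe that the $(x_{-i},-x_{-i})$ block forces $x_{-i}=x_{-i}^*$ so the $\epsilon$-shift acts only on the $f_i$ coordinate, translate the resulting strict inequality into the per-player $\epsilon$-best-response condition, and intersect over $i$. The paper's proof is terser but the logical steps are identical.
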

\begin{proof}
The choice of the direction $\bar{c}_i=(0,...,0,1)^\top\in  \R^{m_i}_+\setminus{\{0\}}$ and the choice of the objective function $g(x)=(x_{-i},-x_{-i},f_i(x)) $ ensures that by Definition~\ref{def:Pareto-c-solution}, $x^*\in \epsilon\argmin \{(x_{-i},-x_{-i},f_i(x))|x \in \mathbb X \}$ is equivalent to: there does not exists an $x\in \mathbb X$ such that $x_{-i} = x_{-i}^*$ and  $f_i(x)< f_i(x^*)-\epsilon$. This is equivalent to $f_i(x_i,x_{-i}^*) + \epsilon  \geq f_i(x^*)$ for any $x_i \in \xcal_i$ such that $(x_i,x_{-i}^*) \in \bbx$. Since this has to hold for all $i\in\{1,...,N\}$, the equivalence to Definition~\ref{defn:nash-approx} follows.
\end{proof} 
\begin{remark}
Theorems~\ref{thm:nash} and~\ref{thm:nash-approx} can also be stated:
\begin{itemize}
\item for generalized games with individual constraint sets $\mathbb{C}_i \subseteq \prod_{j = 1}^N \xcal_j$ for $i\in\{1,...,N\}$,  
i.e., when problem~\eqref{eq:game} in Definition~\ref{defn:nash-shared} is replaced by $x_i^* \in \argm\{f_i(x_i,x_{-i}^*) \; | \; (x_i,x_{-i}^*) \in \mathbb{C}_i\}$. Then, the constraints sets $\bbx$ in the Pareto problems in Theorems~\ref{thm:nash} and~\ref{thm:nash-approx} have to be replaced by the individual constraint sets $\mathbb{C}_i$ for $i\in\{1,...,N\}$.
\item for general linear spaces $\xcal_i$ for the strategies. Then, the ordering cone of the corresponding vector optimization problem has to be adapted accordingly, see Theorem~2.6 in~\cite{FR23}.
\item for vector games, i.e.\ when the objective functions of the players are vector functions. In this case, the ordering cone $\R^{m_i}_+$ in problem~\eqref{eq:vop} has to be adapted accordingly, see also Section~4 in~\cite{FR23}.
\end{itemize}
Within this paper, we are mainly interested in using Theorems~\ref{thm:nash} and~\ref{thm:nash-approx} to numerically approximate the set of all Nash equilibria for certain convex games as detailed in Section~\ref{sec:convex} below. Thus, we will only work with Theorems~\ref{thm:nash} and~\ref{thm:nash-approx} as stated above.
\end{remark}
\begin{remark}\label{rem:weak_opt}
Note that even though in multi-objective optimization one often works with weakly Pareto optimal points or weakly $\epsilon$-Pareto optimal points, for our particular multi-objective optimization problem~\eqref{eq:vop} the results in Theorems~\ref{thm:nash} and~\ref{thm:nash-approx} only hold for the stronger concepts of Pareto optimal points, respectively $\epsilon$-Pareto optimal points. In particular, note that every feasible point $x\in \mathbb X$ is weakly Pareto optimal (and hence also weakly $\epsilon$-Pareto optimal)
for problem~\eqref{eq:vop}. 
This follows easily from the fact that for any arbitrary $\Bar{x} \in \bbx$ it is $((\Bar{x}_{-i},-\Bar{x}_{-i})^\top -\int \mathbb R^{m_i-1}_+ )\cap \{(x_{-i},-x_{-i})\; | \; x \in \bbx\}=\emptyset$. Thus, the concepts of weakly ($\epsilon$-) Pareto optimality are not meaningful for problem~\eqref{eq:vop}. 
\end{remark}

\section{Convex games}\label{sec:convex}
The aim of this paper is to develop an algorithm that uses representations~\eqref{eq:shared} and~\eqref{eq:shared_eps} to approximate the set of Nash equilibria of a non-cooperative game.
To do so, we will focus for the remaining part of this paper on convex games satisfying the following assumption. 
\begin{assumption}{\label{ass:convex}}
\begin{enumerate}
    \item The shared constraint set $\bbx \subseteq \prod_{j=1}^N \mathcal{X}_j$ is convex and compact.
    \item Each cost function $f_i: \prod_{j=1}^N \mathcal{X}_j \to \mathbb R $ is convex.
\end{enumerate}
\end{assumption}
Under Assumption~\ref{ass:convex},~\cite[Theorem~1]{rosen65} guarantees $\NE(f,\mathbb{X}) \neq \emptyset$.
The first assumption is typical for convex games. However, in the second assumption, the joint convexity of each player's cost function is assumed; this is stronger than the usual assumption for convexity in the decision variable for player $i$ only (see, e.g.,~\cite{nikaido1955note,rosen65}). Note that both assumptions imply that each function $f_i$ is Lipschitz continuous on the compact set $\bbx$. Let us denote by $L>0$ the largest of the corresponding Lipschitz constants. Thus, we have for all $i = 1,...,N$ that $|f_i(x_1)-f_i(x_2)| \leq L \norm{x_1-x_2}$ for $x_1,x_2 \in \bbx$ where $\norm{\cdot}$ is the $L_1$ norm on $ \prod_{j=1}^N \mathcal{X}_j $. This will be useful to relate an approximation error made in the preimage space to the corresponding error in the image space using the Lipschitz constant. 

Under these convexity assumptions and Theorem~\ref{thm:nash-approx}, we can hence use convex multi-objective optimization methods to compute an approximation of the set of Nash equilibria.
The following difficulties appear
\begin{itemize}

\item[i)] In convex multi-objective optimization one usually focuses on weakly Pareto optimal points (or weakly $\epsilon$-Pareto optimal points) since they can equivalently be characterized as solutions to the weighted sum scalarization. That is, a point $x^* \in \mathbb X$ is weakly Pareto optimal for the convex problem~\eqref{eq:VOPg} if and only if  $x^*$ is a solution to the scalar problem $\min_{x\in \bbx}\sum_{i=1}^m w_i g_i(x)$ for some $w\in\R^m_+\setminus{\{0\}}$ (Corollary 5.29 of \cite{Jahn11}). However, as stated in Remark~\ref{rem:weak_opt}, the concept of weak Pareto optimality is not meaningful for our problem~\eqref{eq:vop} and we have to work with Pareto optimal points instead. For Pareto optimal points there is not a one-to-one correspondence to solutions of weighted sum scalarizations, but only the following implication: a point $x^* \in \mathbb X$ is Pareto optimal for a convex problem~\eqref{eq:VOPg} if $x^*$ is a solution to the scalar problem $\min_{x\in \bbx}\sum_{i=1}^m w_i g_i(x)$ for some $w\in\R^m_{++}$ (Theorem 5.18(b) of \cite{Jahn11}). The absence of an equivalent characterization of the set of Pareto optimal points through scalarizations will make it impossible to solve our problem in full generality for convex games. 
However, despite this issue we will be able to compute a set which contains the set of all Pareto optimal points of the convex problem \eqref{eq:vop} (and is included in the set of all $\epsilon$-Pareto optimal points) if we make additional assumptions on the structure of the constraint set $\bbx$. We will consider two different structures of $\bbx$, one in Assumption~\ref{asspolycase} and one in Remark~\ref{rem:inconstr}.

\item[ii)] To the best of our knowledge, there is no algorithm so far that computes or approximates the set of all Pareto optimal points or weakly Pareto optimal points for a convex multi-objective optimization problem. In the linear case, such algorithms exist, see~\cite{Armand93,VanTu17,tohidi2018adjacency}, and have been used to compute the set of Nash equilibria for linear games in~\cite{FR23}. For convex multi-objective optimization problems, it is often not necessary to know the set of all (weakly) Pareto optimal points, as one is usually satisfied in finding finitely many weakly Pareto optimal points that approximate the upper image $\mathcal P$. In detail: one usually computes a finite set of weakly Pareto optimal points whose images provide a polyhedral inner approximation $\mathcal{P}^{In}$ of $\mathcal P$ that is in $\varepsilon$-distance to $\mathcal P$  for a given error level $\varepsilon>0$ and a fixed direction $c \in \int \R^m_+$ in the following sense
\begin{align}{\label{innerappr1}}
    \mathcal{P}^{In} \subseteq \mathcal{P} \subseteq \mathcal{P}^{In}-\varepsilon \{{c}\},
\end{align}
see, e.g.,~\cite{LRU14}. This is not sufficient for our purposes as we need on one hand not weakly, but Pareto optimal points, and on the other hand, we need the set of all Pareto optimal points (or $\epsilon$-Pareto optimal points). However, we will see that such a polyhedral approximation~\eqref{innerappr1} will be a first step to reach our goal.

\item[iii)] The algorithm in~\cite{LRU14}, providing a polyhedral inner approximation of $\mathcal P$ satisfying~\eqref{innerappr1}, works under the assumption that the direction $c \in \int\R^m_+$. This assumption is clearly violated for our problem~\eqref{eq:vop} as we need the directions to be  $\bar{c}_i=(0,...,0,1)^\top\in  \R^{m_i}_+\setminus{\{0\}}$ to obtain the representation~\eqref{eq:shared}. 
\end{itemize}

As mentioned above in issue~(i), we try to cover the set of all Pareto optimal points and therefore make additional assumptions on the structure of the constraint set $\bbx$. In the following, we will consider constraints sets $\bbx$ that are polytopes, whereas in Remark~\ref{rem:inconstr}, we consider the case where each player $i$ has an independent convex constraint set $\bbx_i \subseteq \mathcal{X}_i$. Problem~(iii) can be handled by a small modification of the algorithm in~\cite{LRU14} that is possible because of the particular structure of the objective function of problem~\eqref{eq:vop} being linear in all but the last component and the directions $\bar{c}_i$ being zero in these linear components.

Let us now discuss problem~(ii) in detail. The reason why approximations are considered for convex problems is that it is in general not possible to generate the exact upper image by finitely many points or to obtain an analytic expression for it. 
For the same reasons, only approximations of set of Pareto optimal points $\argmin \{g(x) \; | \; x \in \bbx\}$ respectively of the set of $\epsilon$-Pareto optimal points $\epsilon\argmin \{g(x) \; | \; x \in \bbx\}$ of a convex (but not linear) problem~\eqref{eq:VOPg} will be considered. We will focus here on polyhedral approximations as they are finitely generated. 

Hence, we cannot expect to compute the sets $\NE(f,\bbx)$ or $\eNE(f,\bbx)$ via~\eqref{eq:shared}, respectively~\eqref{eq:shared_eps}, exactly. However, below we propose Algorithm~\ref{alg1} that computes a finitely generated set $X$ satisfying
\begin{align}\label{eq:sandwich}
    \NE(f,\mathbb X) \subseteq X  \subseteq \eNE(f,\mathbb X),
\end{align}
as proven in Theorem~\ref{sandwicheps}.
Thus, one obtains an even better approximation of the set of Nash equilibria than the set of all $\epsilon$-approximate Nash equilibria would provide. This will be illustrated in Section~\ref{sec:ex}, see in particular Figure~\ref{fig:figure1} and~\ref{fig:ex2alltogether}.

In the following, we will make the following assumption concerning the structure of the constraint set in addition to considering convex games (Assumption~\ref{ass:convex}). 

\begin{assumption}{\label{asspolycase}}
   The shared constraint set $\bbx \subseteq \prod_{j=1}^N \mathcal{X}_j$ is a nonempty polytope of the form $\bbx=\conv \{\bar{x}^1,...,\Bar{x}^k \}$ for some $k\in\N$. 
\end{assumption}

\begin{remark}
\begin{itemize}
\item
Assumption~\ref{asspolycase} can be equivalently formulated with linear inequalities, i.e., there is some $A \in \mathbb R^{p \times \sum_{j=1}^N n_j}$ and $b \in \mathbb R^p, p \in \mathbb N$ such that $\bbx=\{x \in \mathbb R^{\sum_{j=1}^N n_j}| Ax \leq b\}$.
\item Assumption~\ref{asspolycase} is satisfied for many games found in the literature (see, e.g., \cite[GNEP (21)]{nabetani2011parametrized} and \cite[Proposition 3]{braouezec2021economic}). Included are also special cases like mixed strategies (without further constraints) as then $\bbx=[0,1]^N$ or, more generally, box constraints.
\item In Remark~\ref{rem:inconstr}, we will drop Assumption~\ref{asspolycase} and consider instead the case where each player $i$ has an independent convex constraint set $\bbx_i \subseteq \mathcal{X}_i$.
\end{itemize}
\end{remark}

For the overall goal to algorithmically compute a set $X$ satisfying~\eqref{eq:sandwich}, one has, by equations~\eqref{eq:shared} and~\eqref{eq:shared_eps}, to find for each player $i$ a set $X_i$ satisfying
\begin{align}\label{eq:sandwich2}
   \argmin\{(x_{-i},-x_{-i},f_i(x)) \; | \; x \in \bbx\} \subseteq X_i \subseteq \epsilon\argmin\{(x_{-i},-x_{-i},f_i(x)) \; | \; x \in \bbx\}.
\end{align}
Then, the desired $X$ is obtained by setting $X=\bigcap_{i=1}^N X_i$.
The sets $X_i$ will be constructed in three steps. Recall that each player $i$ considers the multi-objective problem~\eqref{eq:vop}. Let us denote by 
\[
	\mathcal{P}_i:=\{(x_{-i},-x_{-i},f_i(x)) \; | \;x \in \mathbb X\}+\mathbb R^{m_i}_+
\]  
the upper image of problem~\eqref{eq:vop}. Note that by the assumptions of this section, the set $\mathcal{P}_i$ is closed, so the closure in the definition of the upper image is not needed here. In a first step one needs to compute a polyhedral inner approximation $\mathcal{P}^{In}_i$ of $\mathcal{P}_i$ such that
\begin{align}{\label{innerappr}}
    \mathcal{P}_i^{In} \subseteq \mathcal{P}_i \subseteq \mathcal{P}^{In}_i-\varepsilon \{c\}
\end{align}
for $c=\bar{c}_i:=(0,....,0,1)^\top \in\mathbb R^{m_i}_+$ and $\varepsilon>0$. As $\bar{c}_i\notin \int\R^{m_i}_+$, the algorithm in~\cite{LRU14} cannot be applied directly, but can be modified to our setting. The details of this modification are given in Subsection~\ref{subsec:mod}; this addresses problem~(iii) above. The second step is to sort out faces of this polyhedral approximation that are only weakly efficient in a certain sense; this addresses problem~(i) above. Details are given in Subsection~\ref{sec:maxefficientFaces}. The third step is then to approximate the set of preimages that lie below the remaining maximal efficient faces yielding a set of $\epsilon$-Pareto optimal points $X_i$ satisfying~\eqref{eq:sandwich2}. This addresses problem~(ii) above. Details will be given in Subsection~\ref{sec:Proj}.

\subsection{Computing a polyhedral approximation of the upper image}
\label{subsec:mod}

Algorithm~1 and~2 of~\cite{LRU14} allow for the computation of a polyhedral $\varepsilon$-approximation~\eqref{innerappr} to problem~\eqref{eq:vop} for a direction $c_i\in \int\R^{m_i}_+$. Since, by Theorem~\ref{thm:nash-approx}, we have to use $c_i=\bar{c}_i:=(0,....,0,1)^\top \notin \int\R^{m_i}_+$ instead, we cannot apply~\cite[Algorithm~1 or~2]{LRU14} directly. Let us consider Algorithm~1 of~\cite{LRU14} first. Algorithm~2 of~\cite{LRU14} is considered in Remark~\ref{rem:algo2} below. A careful inspection of~\cite[Algorithm 1]{LRU14} reveals that the assumption $c_i\in \int\R^{m_i}_+$ is only used in \cite[Proposition 4.4]{LRU14} to show that the so-called Pascoletti-Serafini scalarization of problem~\eqref{eq:vop} for $v\in\R^{m_i}$, that is, 
\begin{align*}{\tag{$P_2(v)$}}
    \min \{z \in \mathbb R\; | \; (x_{-i},-x_{-i},f_i(x))-z \bar{c}_i-v \leq 0,\; x \in \bbx \},
\end{align*} 
is feasible and strong duality is satisfied between $(P_2(v))$ and its Lagrange dual problem. This in turn is used to prove the correctness of~\cite[Algorithm~1]{LRU14} in~\cite[Theorem~4.9]{LRU14}. 
For the direction $\bar{c}_i=(0,....,0,1)^\top$, feasibility and strong duality might fail for the points $v$ considered in the course of the algorithm.
We will now introduce a modification to~\cite[Algorithm~1]{LRU14} that allows for the specific problem of interest, problem~\eqref{eq:vop}, to
restore feasibility and strong duality for $(P_2(v))$ and its dual. This modification takes advantage of the fact that the zero entries of $\bar{c}_i$ correspond to linear components in the objective of problem~\eqref{eq:vop}. 

Let us now explain the details of this modification. Algorithm 1 of~\cite{LRU14} starts with an initialization phase where an initial outer approximation $\mathcal{P}_0$ of $\mathcal{P}_i$ is computed via solving weighted sum scalarizations. The solutions of the corresponding weighted sum scalarizations are added to an initial set $\bar{\mathcal{X}}$. This is done in line~2 and~3 of \cite[Algorithm~1]{LRU14} and remains the same in the modified version. It is shown in \cite{LRU14} that the initial outer approximation $\mathcal{P}_0$ is pointed. 

However, before entering the iteration phase in line~4 of~\cite[Algorithm~1]{LRU14}, we will add the following two lines to the algorithm, where we denote by $a_i:=\frac{{m_i}-1}{2}$:
\begin{align}{\label{outerapprupdate}}
    \mathcal{P}_0=\mathcal{P}_0 \cap \{y \in \mathbb R^{m_i}\; | \; y_{1:a_i}=-y_{a_i+1:m_i-1} \} \cap \{y \in \mathbb R^{m_i}\; | \; y_{1:a_i} \in \conv \{\Bar{x}^1_{-i},...,\Bar{x}^k_{-i} \} \},
\end{align}
\begin{align}{\label{barXupdate}}
    \bar{\mathcal{X}}=\bar{\mathcal{X}} \cup \{\bar{x}^1,...,\Bar{x}^k \}.
\end{align}
Recall that $\bbx=\conv \{\bar{x}^1,...,\Bar{x}^k \}$ for $k\in\N$ (Assumption~\ref{asspolycase}). This completes the now modified initialization phase. 
The rest of the algorithm (i.e.\ the iteration phase of~\cite[Algorithm~1]{LRU14}) remains unchanged.

Let us now comment on these two modifications. Line~\eqref{outerapprupdate} makes additional cuts to the outer approximation $\mathcal{P}_0$.
The first cut with the linear subspace $\{y \in \mathbb R^{m_i}\; | \; y_{1:a_i}=-y_{a_i+1:m_i-1} \}$ is using the particular structure of problem~\eqref{eq:vop}, utilizing that the first $a_i$ components of the objective function are the negative of the next $a_i$ components. Thus, the upper image has to lie in this linear subspace and this fact can be used for the initial outer approximation $ \mathcal{P}_0$ already. The second cut with the polyhedron $\{y \in \mathbb R^{m_i}\; | \; y_{1:a_i} \in \conv \{\Bar{x}^1_{-i},...,\Bar{x}^k_{-i} \} \}$ is using that, by Assumption~\ref{asspolycase}, $\bbx=\conv \{\bar{x}^1,...,\Bar{x}^k \}$ for $k\in\N$ and the fact that the first $a_i$ components of the objective function of problem~\eqref{eq:vop} are $x_{-i}$.
Since both, the linear space $\{y \in \mathbb R^{m_i}\; | \; y_{1:a_i}=-y_{a_i+1:m_i-1} \}$ and the polyhedron $\{y \in \mathbb R^{m_i}\; | \; y_{1:a_i} \in \conv \{\Bar{x}^1_{-i},...,\Bar{x}^k_{-i} \} \}$, are supersets of the image set $\{(x_{-i},-x_{-i},f_i(x))\; | \;x \in \bbx \}$ of \eqref{eq:vop}, the updated set $\mathcal{P}_0$ remains still an outer approximation of this image set. (Note that one could add $\mathbb R^{m_i}_+$ to $\mathcal{P}_0$ obtained in~\eqref{outerapprupdate} in order to obtain an $\mathcal{P}_0$ that would still be an outer approximation of $\mathcal{P}_i$, but since it is enough for it to be an outer approximation of the image set, the addition of $\mathbb R^{m_i}_+$ is not needed for the algorithm to work correctly.) Further, the updated set $\mathcal{P}_0$ is still a pointed polyhedron.
Line~\eqref{barXupdate} updates the set $\bar{\mathcal{X}}$ by adding the vertices of the feasible set $\bbx$. This does not affect~\cite[Algorithm 1]{LRU14} or its correctness and only changes the set $\bar{\mathcal{X}}$ outputted by the algorithm. It is however crucial for step~3 of Algorithm~\ref{alg1} introduced below, see also Section~\ref{sec:Proj}, as it has an important consequence as detailed in Remark~\ref{rem1}.

In the second phase of~\cite[Algorithm~1]{LRU14} the outer approximation $\mathcal{P}_0$ is iteratively updated until after termination of the algorithm, \eqref{innerappr} is satisfied. We prove in the following that the iteration phase of \cite[Algorithm 1]{LRU14} works also correctly for problem~\eqref{eq:vop} with respect to the boundary direction $\bar{c}_i$, if the modification of the initialization phase, i.e.\ the one with the two additional lines~\eqref{outerapprupdate} and~\eqref{barXupdate}, is used.

The idea of the iteration phase of~\cite[Algorithm 1]{LRU14} is that in each iteration it is checked if the distance of the vertices of the current outer approximation $\mathcal{P}_0$ to the upper image $\mathcal{P}_i$ is less or equal than $\varepsilon$. If for each vertex the distance is less or equal than $\varepsilon$ then the algorithm stops. If a vertex of $\mathcal{P}_0$ is found to have a distance larger than $\varepsilon$, then the set $\mathcal{P}_0$ is updated. To check for each vertex $v \in \mathcal{P}_0$ the distance to $\mathcal{P}_i$, the Pascoletti-Serafini scalarization $(P_2(v))$ is considered. Since $v$ is always chosen among the vertices of $\mathcal{P}_0$, we can, using~\eqref{outerapprupdate}, rewrite $(P_2(v))$ as
\begin{align*}
    \min \{z \in \mathbb R\; | \; x_{-i}=v_{1:a_i}, f_i(x)-z-v_{m_i} \leq 0, Ax \leq b \}.
\end{align*}
Note that since $\bbx$ is a polyhedral set, the constraint $f_i(x)-z-v_{m_i} \leq 0$ is the only nonlinear constraint for $(P_2(v))$. The Lagrange dual problem to $(P_2(v))$ is given by
\begin{align*}{\tag{$D_2(v)$}}
    \max \{ \inf_{x \in \bbx} \{u^\top(Ax-b)+w^\top(x_{-i},-x_{-i},f_i(x))^\top \}-w^\top v\; | \; u \geq 0,\; w^\top \bar{c}_i=1, \; w \geq 0\}.
\end{align*}
To update the outer approximation $\mathcal{P}_0$, optimal solutions of both $(P_2(v))$ and $(D_2(v))$ are required. In the following lemma, the existence of optimal solutions for both problems and strong duality is proven. 

\begin{lemma}{\label{PSduality}}
Let Assumptions~\ref{ass:convex} and~\ref{asspolycase} be satisfied.
Let $\mathcal{P}_0$ be an outer approximation of the image set $\{(x_{-i},-x_{-i},f_i(x))\; | \;x \in \bbx \}$ of \eqref{eq:vop} satisfying~\eqref{outerapprupdate}. For every $v \in \mathcal{P}_0$ there exist optimal solutions $(x^v,z^v)$ and ($u^v,w^v)$ to $(P_2(v))$ and $(D_2(v))$ respectively, and the optimal values coincide.
\end{lemma}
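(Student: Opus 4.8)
The plan is to treat the three assertions—primal feasibility and solvability, dual solvability, and zero duality gap—separately, working with the reformulation of $(P_2(v))$ already derived in the text, namely
\begin{align*}
    \min\{z \in \mathbb R \; | \; x_{-i}=v_{1:a_i}, \; f_i(x)-z-v_{m_i} \leq 0, \; Ax \leq b\}.
\end{align*}
The crucial observation is that, since every $v \in \mathcal{P}_0$ lies in the two cut sets appearing in~\eqref{outerapprupdate}, the components $v_{1:a_i}$ are forced into $\conv\{\bar x^1_{-i},\dots,\bar x^k_{-i}\}$, which equals the projection $\proj_{-i}(\bbx)$ of the polytope $\bbx$ onto the $x_{-i}$-coordinates (projection is linear, so it commutes with taking the convex hull of the vertices). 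First I would use this to settle feasibility: as $v_{1:a_i} \in \proj_{-i}(\bbx)$, there exists at least one $\hat x \in \bbx$ with $\hat x_{-i}=v_{1:a_i}$, and then any $z \geq f_i(\hat x)-v_{m_i}$ makes $(\hat x,z)$ feasible. This is exactly the step that fails for a generic boundary direction: the zero entries of $\bar c_i$ collapse the first $m_i-1$ constraints into the $z$-independent equality $x_{-i}=v_{1:a_i}$, so feasibility cannot be recovered by inflating $z$ and must instead be supplied by the cut.

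Next I would establish existence of a primal minimizer. The feasible $x$-set $\{x \in \bbx \; | \; x_{-i}=v_{1:a_i}\}$ is a closed subset of the compact polytope $\bbx$, hence compact, and it is nonempty by the previous step; since $f_i$ is continuous, $\min\{f_i(x) \; | \; x \in \bbx,\, x_{-i}=v_{1:a_i}\}$ is attained at some $x^v$, and setting $z^v := f_i(x^v)-v_{m_i}$ yields an optimal pair $(x^v,z^v)$ with finite optimal value.

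For strong duality and attainment of a dual optimizer I would invoke a convex-programming duality theorem under a refined Slater condition. In the reformulated problem all constraints are affine except the single inequality $f_i(x)-z-v_{m_i}\leq 0$, and this one can be made \emph{strictly} satisfied while keeping the affine constraints feasible, simply by choosing $\hat x \in \bbx$ with $\hat x_{-i}=v_{1:a_i}$ and $z > f_i(\hat x)-v_{m_i}$. Because a convex problem whose only non-affine constraint admits a strictly feasible point satisfies the relaxed constraint qualification (the affine constraints needing mere feasibility, and $f_i$ being finite and convex on all of $\prod_j\xcal_j$), strong duality holds, the optimal values of $(P_2(v))$ and $(D_2(v))$ coincide, and the dual optimal set is nonempty; I would then read off $(u^v,w^v)$ as a KKT multiplier vector for $(x^v,z^v)$, noting that stationarity in $z$ forces $w^\top\bar c_i = 1$, consistent with the normalization in $(D_2(v))$.

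I expect the main obstacle to be the feasibility step rather than the duality argument: the entire content of the lemma is that the two cuts in~\eqref{outerapprupdate} reposition every $v \in \mathcal{P}_0$ so that the $z$-independent equality $x_{-i}=v_{1:a_i}$ is solvable over $\bbx$, which simultaneously restores feasibility \emph{and} produces the strict-feasibility certificate needed for the constraint qualification. Once this is secured the remaining arguments are standard compactness and convex-duality facts. The one point I would double-check is that the hypothesis "$\mathcal{P}_0$ satisfies~\eqref{outerapprupdate}" is genuinely available for every $v$ handled by the algorithm, which holds because the iteration phase only intersects $\mathcal{P}_0$ with further halfspaces and hence preserves membership in both cut sets.
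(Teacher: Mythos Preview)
Your proposal is correct and follows essentially the same route as the paper's proof: use the cut in~\eqref{outerapprupdate} to produce a feasible $\hat x\in\bbx$ with $\hat x_{-i}=v_{1:a_i}$, obtain a primal optimizer by compactness of the $x$-feasible set, and invoke the weak Slater condition (only the single nonaffine constraint $f_i(x)-z-v_{m_i}\le 0$ needs to be strict) for strong duality and dual attainment. The only cosmetic differences are that the paper constructs $\hat x$ explicitly as the convex combination $\sum_j\lambda^j\bar x^j$ rather than via the projection identity, and it defers the compactness argument to~\cite[Proposition~4.4]{LRU14} whereas you spell it out directly.
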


\begin{proof}
Consider $v \in \mathcal{P}_0$.   By \eqref{outerapprupdate} we know that $v_{1:a_i} \in \conv \{\Bar{x}^1_{-i},...,\Bar{x}^k_{-i} \}$. Thus, there are $\lambda^1,...,\lambda^k \geq 0$ with $\sum_{j=1}^k \lambda^j=1$ such that $v_{1:a_i}=\sum_{j=1}^k \lambda^j \Bar{x}^j_{-i}$. Set $\bar{x}=\sum_{j=1}^k \lambda^j \Bar{x}^j\in\bbx$ and choose $\bar{z} \in \mathbb R$ such that   $f_i(\bar{x})-\bar{z}-v_{m_i}\leq 0$.
 Then $(\bar{x},\bar{z})$ is feasible for $(P_2(v))$. Since the feasible set of $(P_2(v))$ is compact (by the same arguments as in the proof of~\cite[Proposition~4.4]{LRU14}), an optimal solution exists. In order to guarantee strong duality we need to find a feasible element of $(P_2(v))$ that satisfies the weak Slater condition, i.e.\ for all nonaffine constraints the inequality constraint needs to be strict. However, since the only nonaffine constraint of $(P_2(v))$ is  $f_i({x})-{z}-v_{m_i}\leq 0$ we can always find $z^* \in \mathbb R$ such that $f_i(\bar{x})-z^*-v_{m_i}<0$. Then $(\bar{x},z^*)$ satisfies the weak Slater condition which implies strong duality.
\end{proof} 

This result guarantees (by replacing in the proof of~\cite[Theorem~4.9]{LRU14}, statement~\cite[Proposition~4.4]{LRU14} by Lemma~\ref{PSduality}) that the iteration phase works correctly under the changes done in the initialization phase. Thus, the modification of~\cite[Algorithm~1]{LRU14} proposed here outputs, at termination, a finite set $\bar{\mathcal{X}}$ such that $\mathcal{P}^{In}_i:=\conv \{(x_{-i},-x_{-i},f_i(x))\; | \;x \in \Bar{\mathcal{X}} \}+\mathbb R^{m_i}_+$ satisfies \eqref{innerappr}.

\begin{remark}{\label{rem1}}
Denote the elements of the finite set $\Bar{\mathcal{X}}$ produced at termination by the modification of~\cite[Algorithm~1]{LRU14} by $\{{x}^1,...,{x}^s\}$ for $s\in\N$ and consider $\mathcal{P}^{In}_i:=\conv \{(x_{-i},-x_{-i},f_i(x))\; | \;x \in \Bar{\mathcal{X}} \}+\mathbb R^{m_i}_+$. Since for any vertex $\bar{x}$  of $\bbx$, we have, due to \eqref{barXupdate}, $(\bar{x}_{-i},-\bar{x}_{-i},f_i(\bar{x})) \in \mathcal{P}^{In}_i$, we conclude that for any $x \in \bbx$ it holds $x_{-i} \in \conv \{{x}^1_{-i},...,{x}^s_{-i}\}$. As such, for any Pareto optimal element $x^*$ of \eqref{eq:vop} we can express $x^*_{-i}$ via convex combinations of ${x}^1_{-i},...,{x}^s_{-i}$. This is necessary to prove the correctness of Algorithm~\ref{alg1} proposed in Subsection~\ref{Algosection}.
\end{remark}

\begin{remark}{\label{rem:algo2}}
Let us now consider the dual algorithm, Algorithm~2 of~\cite{LRU14}. The assumption $c_i\in \int\R^{m_i}_+$ is only used in \cite[Proposition 3.10]{LRU14}, which is needed to show the correctness of this algorithm. One can prove that for the considered convex multi-objective optimization problem~\eqref{eq:vop}, \cite[Proposition 3.10]{LRU14} holds also for direction $c_i=\bar{c}_i:=(0,....,0,1)^\top \notin \int\R^{m_i}_+$. Hence, Algorithm~2 of~\cite{LRU14} can be applied directly to problem~\eqref{eq:vop}. However, the addition of line~\eqref{barXupdate} is also needed here as it is crucial for step~3 of Algorithm~\ref{alg1} introduced below.
\end{remark}

\subsection{Computing the maximal efficient faces}
\label{sec:maxefficientFaces}
In Subsection~\ref{subsec:mod} we obtained a polyhedral $\varepsilon$-approximation $\mathcal{P}^{In}_i$ of problem~\eqref{eq:vop} satisfying~\eqref{innerappr} for the direction $\bar{c}_i$. Note that the boundary of $\mathcal{P}^{In}_i$ is the set of all weakly minimal elements of $\mathcal{P}^{In}_i$. The next step is to sort out all weakly minimal elements of $\mathcal{P}^{In}_i$ that are not minimal. This addresses problem~(i) above. To do so, we apply the concept of maximal efficient faces in vector optimization. For some general multi-objective problem of the form~\eqref{eq:VOPg}, a face $F$ of $\bbx$ is called efficient if it contains Pareto optimal elements only. An efficient face $F$ of $\bbx$ is called maximal efficient if there is no face $G$ of $\bbx$ such that $F \subsetneq G$. Now consider the following linear multi-objective optimization problem
\begin{align}
\label{faces}
    \min\{y \; | \; y \in \mathcal{P}_i^{In}\}
\end{align}
with ordering cone $\mathbb R^{m_i}_{+}$. One can easily see that the union of all maximal efficient faces of ~\eqref{faces} equals the set of minimal elements of $\mathcal{P}^{In}_i$ since $\mathcal{P}^{In}_i$ is the upper image of ~\eqref{faces}. As this is now a linear problem due to the polyhedral structure of $\mathcal{P}^{In}_i$, one can use existing methods, see e.g.~\cite{Armand93,VanTu17,tohidi2018adjacency}, to compute all maximal efficient faces $F^1,...,F^{k_i}$ of $ \mathcal{P}_i^{In}$, where $k_i\in\N$. 
\begin{remark}{\label{rem:maxfacesbd}}
Note that any maximal efficient face $F$ for problem ~\eqref{faces} is a polytope: Since $F$ is a face of the polyhedral feasible set $\mathcal{P}^{In}_i$ it is also a polyhedron. Assume now that $F$ is not bounded, i.e.\ $F=\conv V+\cone D$ where $V \subseteq \mathcal{P}^{In}_i$ and $D \subseteq \mathbb R^{m_i}_+ \setminus \{0\}$ for finite sets $V$ and $D$. Then there is some $y \in F$ with $y=\Bar{y}+d$ where $\Bar{y} \in \conv V$ and $d \in \mathbb R^{m_i}_+ \setminus \{0\}$. Then it must be that $\Bar{y} \leq y$ and $\Bar{y} \neq y$ which is a contradiction to $y$ being Pareto optimal for problem ~\eqref{faces}.
\end{remark}

\subsection{Approximating the set of Pareto optimal points}
\label{sec:Proj}
The next step is to approximate the set of Pareto optimal points of problem~\eqref{eq:vop}. 
In detail, we want to approximate the set of preimages whose images lie below the maximal efficient faces $F^j$, $j=1,...,k_i$, computed in Section~\ref{sec:maxefficientFaces}, yielding a set of $\epsilon$-Pareto optimal points $X_i$ satisfying~\eqref{eq:sandwich2}.
The aim is to compute
\begin{align}
\label{cp}
	\bar{X}^j_i:=\{x \in \mathbb{X}\; | \; \exists y \in F^j: (x_{-i},-x_{-i},f_i(x)) \leq_{\mathbb R^{m_i}_{+}} y\}
\end{align}
for each maximal efficient face $F^j$, where $j=1,...,k_i$. Note that this is a bounded convex projection problem as considered in~\cite{kr22,kr23} as it is of the form: 
$
\text{compute } \{ x \in \mathbb{X} \; | \;  \exists y \in F^j : (x,y) \in S \}
$
for $S=\{(x,y)\in \mathbb{X}\times  F^j \; | \; (x_{-i},-x_{-i},f_i(x)) \leq_{\mathbb R^{m_i}_{+}} y\}$. The set $S$ is bounded due to Assumption~\ref{asspolycase} and Remark~\ref{rem:maxfacesbd}. As the set $S$ is convex (and not polyhedral in general), the sets $\bar{X}^j_i$ cannot be computed exactly, but can only be approximated by the methods in e.g.~\cite{SZC18,LZS21,kr22} or~\cite[Algorithm~5.1]{kr23}. Note that~\cite{SZC18,LZS21,kr22} solve an associated convex multi-objective problem in dimension $d+1$ for $d=\sum_{j=1}^N n_j$, whereas~\cite[Algorithm~5.1]{kr23} works in dimension $d$.
For a fixed error level $\varepsilon_2>0$ one obtains a finite set $\hat{X}^j_i  \subseteq\mathbb{X}$ satisfying 
\begin{align}
\label{innerproj}
    \conv \hat{X}^j_i \subseteq\bar{X}^j_i\subseteq \conv \hat{X}^j_i+B_{\varepsilon_2},
    \end{align}
see e.g.~\cite[Theorem~3.11]{kr22} and~\cite[Theorem~5.4]{kr23}, where $B_{\varepsilon_2}$ is a closed $\varepsilon_2$-ball around the origin (in the $L_1$ norm). In detail,~\cite[Algorithm~5.1]{kr23} outputs a finite  $\varepsilon_2$-solution $\bar S\subseteq S$ of the bounded convex projection problem~\eqref{cp} such that 
$\hat{X}^j_i:=\{x \in \mathbb{X} \; | \; (x,y)\in  \bar S\}$ (i.e. the collection of the $x$ components of the finitely many vectors $(x,y)\in  \bar S$)  satisfies~\eqref{innerproj}.

We will show in Lemma~\ref{sandwich} below that the union of the sets $\bar{X}^j_i$ from the convex projection satisfy~\eqref{eq:sandwich2} with respect to the error level $ \varepsilon_1$. As the convex projections $\bar{X}^j_i$ cannot be computed exactly, we will then show that the union of the approximate sets, i.e., 
\begin{align*}
	X_i:=\bigcup_{j=1}^{k_i} ( \conv \hat{X}^j_i+B_{\varepsilon_2})\cap \bbx
\end{align*}
is the desired subset of $\epsilon$-Pareto optimal points of problem~\eqref{eq:vop} satisfying~\eqref{eq:sandwich2} for an adapted error level $\epsilon$, that involves the two error levels $\varepsilon_1, \varepsilon_2>0$ and the Lipschitz constant of the function $f_i$. Hence the set 
$
	X=\bigcap_{i=1}^N X_i
$ 
 will be the desired set yielding \eqref{eq:sandwich}:
 \begin{align*}
    \NE(f,\mathbb X) \subseteq X  \subseteq \eNE(f,\mathbb X).
\end{align*}

\subsection{Algorithm and main result}{\label{Algosection}}
We are now ready to combine the three steps described in the last subsections, present our algorithm, and prove the main result of this paper.

\begin{alg}{\label{alg1}}\begin{description}

\item[Input]
convex cost functions $f_1,...,f_N$ and shared polyhedral constraint set $\bbx$ satisfying Assumptions~\ref{ass:convex} and~\ref{asspolycase}, approximation levels $\varepsilon_1, \varepsilon_2>0$

\item[For each] $i=1,...,N$ do
\item[step 1:] Consider the convex multi-objective optimization problem \eqref{eq:vop}:
\begin{equation*}
\min\{(x_{-i},-x_{-i},f_i(x)) \; | \; x \in \bbx\}.
\end{equation*}
Compute an inner approximation $\mathcal{P}^{In}_i$ such that \eqref{innerappr} is satisfied with approximation level $\varepsilon_1>0$ using~\cite[Algorithm~1]{LRU14} with the modified initialization using~\eqref{outerapprupdate} and~\eqref{barXupdate} as described in Section~\ref{subsec:mod} or using~\cite[Algorithm~2]{LRU14} with modification~\eqref{barXupdate} as discussed in Remark~\ref{rem:algo2}.
\item[step 2:] Consider the linear multi-objective optimization problem~\eqref{faces}:
\begin{align*}
    \min\{y \; | \; y \in \mathcal{P}_i^{In}\}
\end{align*}
with ordering cone $\mathbb R^{m_i}_{+}$ and compute the maximal efficient faces $F^1,...,F^{k_i}$ (e.g.\ with algorithm from~\cite{Armand93,VanTu17,tohidi2018adjacency},  see also Section~\ref{sec:maxefficientFaces}). 
\item[step 3:]
For each $j=1,...,k_i$ and given approximation level $\varepsilon_2>0$ compute a finite  $\varepsilon_2$-solution $\bar S$ of the convex projection problem~\eqref{cp}:
\begin{align*}
	\text{compute } \bar{X}^j_i:=\{x \in \mathbb{X}\; | \; \exists y \in F^j: (x_{-i},-x_{-i},f_i(x)) \leq_{\mathbb R^{m_i}_{+}} y\}.
\end{align*}
This can be done by e.g.~\cite{SZC18,LZS21,kr22} or~\cite[Algorithm~5.1]{kr23}, see also Section~\ref{sec:Proj}.
Then, $\hat{X}^j_i:=\{x \in \mathbb{X} \; | \; (x,y)\in  \bar S\}$ yields an $\varepsilon_2$-approximation of the set $\bar{X}^j_i$ in the sense of~\eqref{innerproj}.
Set 
    \begin{align*}
X_i=\bigcup_{j=1}^{k_i} ( \conv \hat{X}^j_i+B_{\varepsilon_2})\cap \bbx.
    \end{align*}
\item[Output:] $X=\bigcap_{i=1}^N X_i$  
\end{description}
\end{alg}

We can now state the main theorem of this paper. Recall that $L>0$ is the largest of the Lipschitz constants of the functions $f_i$.

\begin{theorem}{\label{sandwicheps}}
Let Assumptions~\ref{ass:convex} and~\ref{asspolycase} be satisfied.  Let $X \subseteq \mathbb R^{\sum_{i = 1}^N n_i}$ be the set computed by Algorithm~\ref{alg1} and let $\epsilon=\varepsilon_1+2L \varepsilon_2$. Then it holds 
\begin{align}\label{eq:sandwichth}
    \NE(f,\mathbb X) \subseteq X   \subseteq \epsilon\NE(f,\mathbb X).
\end{align}
\end{theorem}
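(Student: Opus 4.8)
The plan is to reduce the global sandwich to a per‑player sandwich and then invoke the two equivalence results. Writing $g_i(x):=(x_{-i},-x_{-i},f_i(x))$, Theorems~\ref{thm:nash} and~\ref{thm:nash-approx} give $\NE(f,\bbx)=\bigcap_{i=1}^N\argmin\{g_i(x)\; | \;x\in\bbx\}$ and $\eNE(f,\bbx)=\bigcap_{i=1}^N\epsilon\argmin\{g_i(x)\; | \;x\in\bbx\}$. Since intersection is monotone, it suffices to verify, for each player $i$, the inclusion~\eqref{eq:sandwich2} for the set $X_i$ produced in step~3, namely
\begin{align*}
\argmin\{g_i(x)\; | \;x\in\bbx\}\ \subseteq\ X_i\ \subseteq\ \epsilon\argmin\{g_i(x)\; | \;x\in\bbx\},\qquad \epsilon=\varepsilon_1+2L\varepsilon_2,
\end{align*}
and then intersect over $i$ to obtain~\eqref{eq:sandwichth}. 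Throughout I would use the characterization established inside the proof of Theorem~\ref{thm:nash-approx}: a point $x^*\in\bbx$ lies in $\delta\argmin\{g_i(x)\; | \;x\in\bbx\}$ if and only if there is no $x\in\bbx$ with $x_{-i}=x^*_{-i}$ and $f_i(x)<f_i(x^*)-\delta$.

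The left inclusion is routine and combines Lemma~\ref{sandwich} with the projection estimate~\eqref{innerproj}. Lemma~\ref{sandwich} yields $\argmin\{g_i(x)\; | \;x\in\bbx\}\subseteq\bigcup_{j=1}^{k_i}\bar X_i^j$; by definition every $\bar X_i^j\subseteq\bbx$, and by~\eqref{innerproj} we have $\bar X_i^j\subseteq\conv\hat X_i^j+B_{\varepsilon_2}$. Hence $\bar X_i^j\subseteq(\conv\hat X_i^j+B_{\varepsilon_2})\cap\bbx$, so $\bigcup_j\bar X_i^j\subseteq X_i$ and all Pareto optimal points of~\eqref{eq:vop} are captured.

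The substance is the right inclusion. By~\eqref{innerproj}, $\conv\hat X_i^j\subseteq\bar X_i^j$, so every $x\in X_i$ lies in $\bbx$ and satisfies $\norm{x-\bar x}\le\varepsilon_2$ for some $\bar x\in\bar X_i^j$, which by Lemma~\ref{sandwich} is $\varepsilon_1$-Pareto optimal. It therefore suffices to prove the transfer statement: if $\bar x\in\varepsilon_1\argmin\{g_i(x)\; | \;x\in\bbx\}$ and $x\in\bbx$ with $\norm{x-\bar x}\le\varepsilon_2$, then $x\in\epsilon\argmin\{g_i(x)\; | \;x\in\bbx\}$. Arguing by contradiction, suppose some $x'\in\bbx$ has $x'_{-i}=x_{-i}$ and $f_i(x')<f_i(x)-\varepsilon_1-2L\varepsilon_2$. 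The natural competitor at the perturbed profile is $x'':=(x'_i,\bar x_{-i})$, which keeps player $i$'s deviating strategy but substitutes $\bar x_{-i}$ for $x_{-i}$. Since $\norm{x_{-i}-\bar x_{-i}}\le\norm{x-\bar x}\le\varepsilon_2$, Lipschitz continuity gives $f_i(x'')\le f_i(x')+L\varepsilon_2$ and $f_i(x)\le f_i(\bar x)+L\varepsilon_2$; chaining these with the assumed strict inequality yields $f_i(x'')<f_i(\bar x)-\varepsilon_1$ at the fixed profile $x''_{-i}=\bar x_{-i}$, contradicting $\bar x\in\varepsilon_1\argmin$. The two Lipschitz estimates are precisely what generate the factor $2L\varepsilon_2$.

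The main obstacle is the feasibility of the competitor $x''=(x'_i,\bar x_{-i})$. For box constraints $\bbx=\prod_j\bbx_j$ this point is automatically feasible, since $x'_i\in\bbx_i$ and $\bar x_{-i}\in\prod_{j\neq i}\bbx_j$, and the argument closes at once. For a general polytope $\bbx=\{x\; | \;Ax\le b\}$ (Assumption~\ref{asspolycase}), substituting $\bar x_{-i}$ while retaining $x'_i$ may leave the feasible set, so the delicate point is to transport $x'$ to a genuinely feasible competitor in the slice $\{u\; | \;(u,\bar x_{-i})\in\bbx\}$ (nonempty, as it contains $\bar x_i$) within a distance controlled by $\norm{x_{-i}-\bar x_{-i}}$. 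This is exactly where the polyhedral structure of Assumption~\ref{asspolycase} is needed, through a Hoffman-type Lipschitz bound on the feasible-slice map; I expect verifying that the induced perturbation stays within the stated error budget to be the crux of the full proof. Once the per-player inclusion $X_i\subseteq\epsilon\argmin\{g_i(x)\; | \;x\in\bbx\}$ is secured, intersecting over $i$ and applying Theorem~\ref{thm:nash-approx} with $\epsilon=\varepsilon_1+2L\varepsilon_2$ gives $X=\bigcap_{i=1}^N X_i\subseteq\eNE(f,\bbx)$, completing~\eqref{eq:sandwichth}.
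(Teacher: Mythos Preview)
Your overall architecture is identical to the paper's: reduce~\eqref{eq:sandwichth} to the per-player sandwich~\eqref{eq:sandwich2} via Theorems~\ref{thm:nash} and~\ref{thm:nash-approx}, obtain the left inclusion from Lemma~\ref{sandwich} together with~\eqref{innerproj}, and obtain the right inclusion by a Lipschitz transfer from an $\varepsilon_1$-Pareto optimal neighbour. This transfer is exactly the content of the paper's Lemma~\ref{epsbound}, and your three-step chain (two Lipschitz estimates of size $L\varepsilon_2$ plus one $\varepsilon_1$-Pareto inequality) is the same decomposition the paper writes out.

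The one substantive difference is that you explicitly isolate the feasibility of the auxiliary point $x''=(x'_i,\bar x_{-i})$ (in the paper's notation, $(x_i,x^*_{-i})$) as ``the main obstacle'' and propose a Hoffman-type Lipschitz bound on polyhedral slices to repair it. The paper does \emph{not} do this: in the proof of Lemma~\ref{epsbound} it simply asserts $|f_i(x_i,x^*_{-i}+b^*_{-i})-f_i(x_i,x^*_{-i})|\le L\varepsilon_2$ and $f_i(x_i,x^*_{-i})+\varepsilon_1\ge f_i(x^*)$ without checking that $(x_i,x^*_{-i})\in\bbx$. As you correctly observe, for box constraints this is automatic, but under the general polyhedral Assumption~\ref{asspolycase} the point can lie outside $\bbx$ (so that, strictly speaking, neither the stated Lipschitz bound on $\bbx$ nor $\varepsilon_1$-Pareto optimality of $x^*$ applies to it). In short, you have been more careful than the paper: your Hoffman idea would close the gap rigorously (at the cost of a constant depending on $\bbx$), whereas the paper's argument simply proceeds through the intermediate point as written and arrives at $\epsilon=\varepsilon_1+2L\varepsilon_2$ directly.
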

In order to prove Theorem~\ref{sandwicheps} we need the following lemmata. The first one shows that the union of the exact sets $\bar{X}^j_i$ of the convex projection~\eqref{cp} satisfy~\eqref{eq:sandwich2} with respect to the error level $ \varepsilon_1$.

 \begin{lemma}{\label{sandwich}}
 Let Assumptions~\ref{ass:convex} and~\ref{asspolycase} be satisfied.
 For any $i=1,...,N$ it holds
    \begin{align*}
        \argmin \{(x_{-i},-x_{-i},f_i(x))\; | \;x \in \mathbb{X}\} \subseteq \bigcup_{j=1}^{k_i} \bar{X}^j_i \subseteq  \varepsilon_1\argmin \{(x_{-i},-x_{-i},f_i(x))\; | \;x \in \mathbb{X}\}.
    \end{align*}
\end{lemma}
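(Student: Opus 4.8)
The plan is to prove the two inclusions separately, with the common bridge being an analysis of the ``value functions'' obtained by slicing the two upper images at a fixed opponent strategy. Throughout I write $g(x):=(x_{-i},-x_{-i},f_i(x))$ and, for a strategy profile $\xi$ of the other players, set $\phi(\xi):=\min\{f_i(x)\mid x\in\mathbb X,\ x_{-i}=\xi\}$ (the value of problem~\eqref{eq:game} with $x_{-i}^*=\xi$) and $\phi^{In}(\xi):=\min\{c\mid(\xi,-\xi,c)\in\mathcal P^{In}_i\}$. First I would record the two-sided estimate $\phi(\xi)\le\phi^{In}(\xi)\le\phi(\xi)+\varepsilon_1$: the left inequality is immediate from $\mathcal P^{In}_i\subseteq\mathcal P_i$, while the right one follows by feeding $(\xi,-\xi,\phi(\xi))\in\mathcal P_i$ into $\mathcal P_i\subseteq\mathcal P^{In}_i-\varepsilon_1\{\bar c_i\}$ from~\eqref{innerappr} and using $\bar c_i=(0,\dots,0,1)^\top$. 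I would also reuse the characterization extracted from the proof of Theorem~\ref{thm:nash-approx}: $x\in\varepsilon_1\argmin\{g(x)\mid x\in\mathbb X\}$ iff $f_i(x)\le\phi(x_{-i})+\varepsilon_1$, together with its $\varepsilon_1=0$ version for Pareto optimality.

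The second, and in my view most load-bearing, preliminary step is to pin down the shape of the minimal elements of $\mathcal P^{In}_i$. Since $\mathcal P^{In}_i=\conv\{g(x)\mid x\in\bar{\mathcal X}\}+\mathbb R^{m_i}_+=:C+\mathbb R^{m_i}_+$ with $C$ a polytope, I would invoke the elementary identity $\Min(C+\mathbb R^{m_i}_+)=\Min(C)$ for the componentwise order. Because every generator $g(x)$ lies in the subspace $\{y\mid y_{1:a_i}=-y_{a_i+1:m_i-1}\}$, so does $C$, and hence every minimal element of $\mathcal P^{In}_i$ has the special form $y=(\eta,-\eta,\phi^{In}(\eta))$ for some $\eta$. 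The order-theoretic fact I keep exploiting is that $(\eta,-\eta)\le(\xi,-\xi)$ forces $\eta=\xi$ (from $\eta\le\xi$ and $-\eta\le-\xi$ simultaneously); this is exactly the purpose of the doubled block $(x_{-i},-x_{-i})$ and is the mechanism that pins the opponents' strategy in both inclusions. I also use, as noted in Section~\ref{sec:maxefficientFaces}, that the union of the maximal efficient faces $F^1,\dots,F^{k_i}$ equals $\Min(\mathcal P^{In}_i)$.

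For the first inclusion I would take $x^*$ Pareto optimal, so $f_i(x^*)=\phi(x^*_{-i})$, and show that $y^*:=(x^*_{-i},-x^*_{-i},\phi^{In}(x^*_{-i}))$ is a minimal element of $\mathcal P^{In}_i$: if some $\tilde y\in\mathcal P^{In}_i$ satisfied $\tilde y\le y^*$, writing $\tilde y=c+k$ with $c\in C$ and $k\in\mathbb R^{m_i}_+$ and comparing the first $m_i-1$ coordinates forces, via the $(\eta,-\eta)$ argument, the $x^*_{-i}$ block of $c$ and $y^*$ to agree and those entries of $k$ to vanish; the definition of $\phi^{In}$ then forces equality in the last coordinate, so $\tilde y=y^*$. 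Hence $y^*\in F^j$ for some $j$, and since $g(x^*)=(x^*_{-i},-x^*_{-i},\phi(x^*_{-i}))\le y^*$ by $\phi\le\phi^{In}$, we conclude $x^*\in\bar X^j_i$.

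For the second inclusion I would take $x\in\bar X^j_i$, so $g(x)\le y$ for some $y\in F^j\subseteq\Min(\mathcal P^{In}_i)$, which has the form $y=(\eta,-\eta,\phi^{In}(\eta))$. Comparing the first $m_i-1$ coordinates forces $x_{-i}=\eta$, and the last coordinate gives $f_i(x)\le\phi^{In}(x_{-i})$; chaining with $\phi^{In}\le\phi+\varepsilon_1$ yields $f_i(x)\le\phi(x_{-i})+\varepsilon_1$, which by the characterization above is precisely $x\in\varepsilon_1\argmin\{g(x)\mid x\in\mathbb X\}$. The main obstacle is the second preliminary step, namely verifying that minimal elements of the inner approximation inherit the $(\eta,-\eta,\cdot)$ structure and that $y^*$ is genuinely minimal rather than merely weakly minimal (cf.\ Remark~\ref{rem:weak_opt}); once that is in place, everything else is bookkeeping with the two value-function bounds.
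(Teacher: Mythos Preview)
Your proof is correct and is organized differently from the paper's. You frame everything through the two value functions $\phi$ and $\phi^{In}$ and the key two-sided bound $\phi\le\phi^{In}\le\phi+\varepsilon_1$, together with the structural identity $\Min(C+\mathbb R^{m_i}_+)=\Min(C)$ for the polytope $C=\conv\{g(x)\mid x\in\bar{\mathcal X}\}$. The paper argues the two inclusions more directly: for the first inclusion it invokes Remark~\ref{rem1} (hence modification~\eqref{barXupdate}) to produce a point of $\mathcal P^{In}_i$ with prescribed $(x^*_{-i},-x^*_{-i})$ block, uses pointedness to descend to a minimal element $(x^*_{-i},-x^*_{-i},\hat y)$, and then compares via the chain $f_i(x^*)\le f_i(\sum\hat\lambda^l x^l)\le\sum\hat\lambda^l f_i(x^l)=\hat y$ (Pareto optimality and convexity of $f_i$); for the second inclusion it shifts by $-\varepsilon_1\bar c_i$ and works inside the outer approximation $\mathcal P^{Out}_i=\mathcal P^{In}_i-\varepsilon_1\bar c_i$. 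A noteworthy by-product of your route is that it never uses Remark~\ref{rem1}: well-definedness of $\phi^{In}(x^*_{-i})$ comes straight from~\eqref{innerappr}, so your argument shows the lemma holds for \emph{any} inner approximation satisfying~\eqref{innerappr}, not only for the output of the modified algorithm with~\eqref{barXupdate} in place. The paper's approach, by contrast, ties the first inclusion to the specific construction of $\bar{\mathcal X}$ but is slightly more hands-on about where the inequality $f_i(x^*)\le\hat y$ comes from.
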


\begin{proof}
    For the left inclusion consider $x^* \in \argmin \{(x_{-i},-x_{-i},f_i(x))\; | \;x \in \mathbb{X}\}$.\\ 
    Let $\mathcal{P}^{In}_i=\conv\{(x_{-i},-x_{-i},f_i(x))\; | \;x \in \Bar{\mathcal{X}} \}+\mathbb R^{m_i}_+$ for $ \Bar{\mathcal{X}}=\{{x}^1,...,{x}^s\}\subseteq \bbx$ with $s\in\N$ 
 be the inner approximation of $\mathcal{P}_i$ computed as described in Section~\ref{subsec:mod}. Due to Remark \ref{rem1}, we know there are $\lambda^1,...,\lambda^s \geq 0$ with $\sum_{l=1}^s \lambda^l=1$ such that $x^*_{-i}=\sum_{l=1}^s \lambda^l {x}^l_{-i}$. Thus, $(x^*_{-i},-x^*_{-i},\sum_{l=1}^s \lambda^l f_i({x}^l)) \in \mathcal{P}^{In}_i$. Then there is $\hat{y}$ such that $(x^*_{-i},-x^*_{-i},\hat{y})$ is minimal in $\mathcal{P}^{In}_i$: Assume there would be no such $\hat{y}$. Then for all $\alpha>0$ it would be $(x^*_{-i},-x^*_{-i},\sum_{l=1}^s \lambda^l f_i({x}^l))-\alpha \bar{c}_i \in \mathcal{P}^{In}_i$. In that case $\mathcal{P}^{In}_i$ would contain the line $\{(x^*_{-i},-x^*_{-i},\sum_{l=1}^s \lambda^l f_i({x}^l))+\alpha \bar{c}_i\; | \; \alpha \in \mathbb R\}$ which is a contradiction to $\mathcal{P}^{In}_i$ being pointed. Thus, there exists $\hat{y}$ such that $(x^*_{-i},-x^*_{-i},\hat{y})$ is minimal in $\mathcal{P}^{In}_i$ and therefore exists a maximal efficient face $F^j$ of $\mathcal{P}^{In}_i$ with $(x^*_{-i},-x^*_{-i},\hat{y}) \in F^j$. So there are $\hat{\lambda}^1,...,\hat{\lambda}^s \geq 0$ with $\sum_{l=1}^s \hat{\lambda}^l=1$ such that  $(x^*_{-i},-x^*_{-i},\hat{y})=(\sum_{l=1}^s \hat{\lambda}^l {x}^l_{-i},-\sum_{l=1}^s \hat{\lambda}^l {x}^l_{-i},\sum_{l=1}^s \hat{\lambda}^l f_i({x}^l))$. 
 Since 
 \begin{align}{\label{x}}
f_i(x^*) \leq f_i(\sum_{l=1}^s \hat{\lambda}^l {x}^l) \leq \sum_{l=1}^s \hat{\lambda}^l f_i({x}^l)=\hat{y},
\end{align}
by $x^* \in \argmin \{(x_{-i},-x_{-i},f_i(x))\; | \;x \in \mathbb{X}\}$, $\sum_{l=1}^s \hat{\lambda}^l {x}^l\in\bbx$, and $f_i$ convex, it is $x^* \in \bar{X}^j_i$.

 For the second statement let $x^* \in \bigcup_{j=1}^{k_i} \bar{X}^j_i$. Then $x^* \in \bar{X}^j_i$ for some $j$. It is $(x^*_{-i},-x^*_{-i},f_i(x^*)) \leq y$ for $y \in F^j$. Note that $y$ is a minimal element in $\mathcal{P}^{In}_i$. So, $(x^*_{-i},-x^*_{-i},f_i(x^*)-\varepsilon_1) \leq y-(0,0,\varepsilon_1)$ and $y-(0,0,\varepsilon_1)$ is a minimal element in $\mathcal{P}^{Out}_i$. This means either $(x^*_{-i},-x^*_{-i},f_i(x^*)-\varepsilon_1) \notin \mathcal{P}_i$ or $(x^*_{-i},-x^*_{-i},f_i(x^*)-\varepsilon_1)$ is a minimal element in $ \mathcal{P}_i$. 
 In any case there is no $x \in \mathbb X$ with $x_{-i}=x^*_{-i}$ and $f_i(x)+\varepsilon_1<f_i(x^*)$.   
 \end{proof}

As the convex projections $\bar{X}^j_i$ cannot be computed exactly, we will now prove the corresponding result for the approximate sets $X_i=\bigcup_{j=1}^{k_i} ( \conv \hat{X}^j_i+B_{\varepsilon_2})\cap \bbx$, where the error level has to be adjusted by the error of the approximate convex projection $\varepsilon_2$. As this error is made in the preimage space, it needs to be translated into the image space of the convex multi-objective optimization problem~\eqref{eq:vop}. For this step the Lipschitz continuity of $f_i$ implied by Assumption~\ref{ass:convex} is necessary.
\begin{lemma}{\label{epsbound}}
Let Assumptions~\ref{ass:convex} and~\ref{asspolycase} be satisfied. Let $\epsilon=\varepsilon_1+2L \varepsilon_2$.
 For any $i=1,...,N$ it holds
    \begin{align*}
        \argmin \{(x_{-i},-x_{-i},f_i(x))\; | \;x \in \mathbb{X}\} \subseteq X_i \subseteq  \epsilon\argmin \{(x_{-i},-x_{-i},f_i(x))\; | \;x \in \mathbb{X}\}.
    \end{align*}
\end{lemma}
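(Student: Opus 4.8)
The plan is to deduce Lemma~\ref{epsbound} from the exact statement in Lemma~\ref{sandwich} by absorbing the error of the inexact convex projection~\eqref{innerproj} into the tolerance, so that $\varepsilon_1$ is replaced by $\epsilon=\varepsilon_1+2L\varepsilon_2$. Throughout I fix a player $i$ and abbreviate $g(x)=(x_{-i},-x_{-i},f_i(x))$, and I recall from the proof of Theorem~\ref{thm:nash-approx} that $x$ is ($\delta$-)Pareto optimal for~\eqref{eq:vop} if and only if there is no feasible competitor in the same slice $\{x'\in\bbx:x'_{-i}=x_{-i}\}$ that beats $f_i(x)$ by more than $\delta$. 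I would prove the two inclusions separately, the first being routine set algebra and the second carrying the real content.

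For the left inclusion I combine three facts: $\argmin\{g\}\subseteq\bigcup_j\bar{X}^j_i$ from Lemma~\ref{sandwich}, the outer bound $\bar{X}^j_i\subseteq\conv\hat{X}^j_i+B_{\varepsilon_2}$ from the right half of~\eqref{innerproj}, and $\bar{X}^j_i\subseteq\bbx$ from the definition of the projection~\eqref{cp}. The last two give $\bar{X}^j_i\subseteq(\conv\hat{X}^j_i+B_{\varepsilon_2})\cap\bbx$, so taking the union over $j$ yields $\bigcup_j\bar{X}^j_i\subseteq X_i$, and chaining with Lemma~\ref{sandwich} gives $\argmin\{g\}\subseteq X_i$.

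For the right inclusion take $x^*\in X_i$. By construction there is an index $j$ and a point $\tilde x\in\conv\hat{X}^j_i$ with $\norm{x^*-\tilde x}\le\varepsilon_2$ and $x^*\in\bbx$. Since $\conv\hat{X}^j_i\subseteq\bar{X}^j_i$ (left half of~\eqref{innerproj}) and $\bar{X}^j_i\subseteq\varepsilon_1\argmin\{g\}$ (Lemma~\ref{sandwich}), the reference point $\tilde x$ is $\varepsilon_1$-Pareto optimal, i.e. $f_i(\tilde x)\le m^*(\tilde x_{-i})+\varepsilon_1$ where $m^*(w):=\min\{f_i(x):x\in\bbx,\ x_{-i}=w\}$. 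I would then argue by contradiction: if $x^*$ were not $\epsilon$-Pareto optimal there would be $x\in\bbx$ with $x_{-i}=x^*_{-i}$ and $f_i(x)<f_i(x^*)-\epsilon$. Transporting this deviation into the slice of $\tilde x$ by the shift $x\mapsto x'=x+(\tilde x-x^*)$ keeps $x'_{-i}=\tilde x_{-i}$ and moves $x$ by $\norm{x'-x}=\norm{\tilde x-x^*}\le\varepsilon_2$, so two applications of the Lipschitz bound (one to pass from $f_i(x^*)$ to $f_i(\tilde x)$, one to pass from $f_i(x)$ to $f_i(x')$) give $f_i(x')\le f_i(x)+L\varepsilon_2<f_i(x^*)-\epsilon+L\varepsilon_2\le f_i(\tilde x)-\varepsilon_1$, contradicting the $\varepsilon_1$-Pareto optimality of $\tilde x$. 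This bookkeeping is exactly where the value $\epsilon=\varepsilon_1+2L\varepsilon_2$ is forced.

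The step I expect to be the crux is the transport: the shifted point $x'=x+(\tilde x-x^*)$ need not lie in $\bbx$, since changing the opponents' block from $x^*_{-i}$ to $\tilde x_{-i}$ can violate the shared constraints even though the shift has $L_1$-length at most $\varepsilon_2$. Equivalently, the argument requires the one-sided across-slice estimate $m^*(\tilde x_{-i})\le m^*(x^*_{-i})+L\varepsilon_2$, i.e. a Lipschitz-type stability of the value function of player $i$'s parametric program under an $\varepsilon_2$-perturbation of $x_{-i}$. Producing a \emph{feasible} transport whose $L_1$-displacement (hence objective change) stays controlled by $\varepsilon_2$ (resp.\ $L\varepsilon_2$) is precisely where Assumption~\ref{asspolycase} and the Lipschitz continuity of $f_i$ must enter; I would try to realise it by relocating $x'$ back into $\bbx$ along player $i$'s coordinates and bounding the resulting increase in $f_i$, and I expect this feasibility/stability claim to be the part demanding the most care.
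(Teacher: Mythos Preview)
Your overall strategy---deriving both inclusions from Lemma~\ref{sandwich} together with the projection estimate~\eqref{innerproj}, and absorbing the $\varepsilon_2$-error via two applications of the Lipschitz bound---is exactly the route the paper takes. The paper does not write out the left inclusion separately (it is treated as immediate from Lemma~\ref{sandwich} and~\eqref{innerproj}, just as you do), and for the right inclusion it carries out, in your notation, the direct telescoping
\[
f_i(x)+\epsilon-f_i(x^*)=\bigl[f_i(x)-f_i(x_i,\tilde x_{-i})+L\varepsilon_2\bigr]+\bigl[f_i(\tilde x)-f_i(x^*)+L\varepsilon_2\bigr]+\bigl[f_i(x_i,\tilde x_{-i})-f_i(\tilde x)+\varepsilon_1\bigr]\ge 0.
\]
The only cosmetic difference is that the paper shifts just the $-i$ block (using the mixed point $(x_i,\tilde x_{-i})$) rather than the full vector as in your $x'=x+(\tilde x-x^*)$; the displacement is still bounded by $\varepsilon_2$ in either case, so the arithmetic is the same.

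Regarding the crux you flag: the feasibility of the transported point (whether $(x_i,\tilde x_{-i})$, respectively your $x'$, lies in $\bbx$ so that the Lipschitz constant $L$ and the $\varepsilon_1$-Pareto property of $\tilde x$ legitimately apply) is \emph{not} addressed in the paper's proof. The paper simply invokes both estimates at $(x_i,\tilde x_{-i})$ without checking membership in $\bbx$, and no relocation or value-function stability argument of the kind you anticipate is carried out. So your sketch is already at least as complete as the published argument; the concern you single out is a genuine subtlety that the paper passes over.
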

\begin{proof}
    Fix $i\in\{1,...,N\}$ and let $\Bar{x} \in X_i$. Then, there is some $j\in\{1,...,k_i\}$ such that $\Bar{x} \in \conv \hat{X}^j_i+B_{\varepsilon_2}$.
    So, $\bar{x}=x^*+b^*$ for $x^* \in \conv \hat{X}^j_i$ and $\norm{b^*} \leq \varepsilon_2$. Since $\conv \hat{X}^j_i$ is an inner approximation of $\bar{X}^j_i$, see~\eqref{innerproj}, it follows by Lemma~\ref{sandwich} that $x^*$ is $\varepsilon_1$-Pareto optimal for~\eqref{eq:vop}. Thus, $f_i(x^*) \leq f_i(x)+\varepsilon_1$ for any $x \in \bbx$ with $x_{-i}=x^*_{-i}$ and $f_i(x) \leq f_i(x^*)$. Now let $x \in \mathbb X$ with $x_{-i}=\bar{x}_{-i}=x^*_{-i}+b^*_{-i}$ and $f_i(x) \leq f_i(\bar{x})$. It holds 
    \begin{align*}
        f_i(x)&+\varepsilon_1+2L\varepsilon_2-f_i(\bar{x})=f_i(x_i,x^*_{-i}+b^*_{-i})+\varepsilon_1+2L \varepsilon_2-f_i(x^*+b^*)\\
        &=f_i(x_i,x^*_{-i}+b^*_{-i})-f_i(x_i,x^*_{-i})+L  \varepsilon_2+f_i(x^*)-f_i(x^*+b^*) + L \varepsilon_2+f_i(x_i,x^*_{-i})-f_i(x^*)+\varepsilon_1 \geq 0,
    \end{align*}
    where we used that $|f_i(x_i,x^*_{-i}+b^*_{-i})-f_i(x_i,x^*_{-i})| \leq L  \varepsilon_2$ and $|f_i(x^*)-f_i(x^*+b^*)| \leq  L  \varepsilon_2$ due to Lipschitz continuity of $f_i$ as well as $f_i(x_i,x^*_{-i})+\varepsilon_1 \geq f_i(x^*)$ due to $x^*$ being $\varepsilon_1$-Pareto optimal.
    Thus, $f_i(x)+(\varepsilon_1 +2L\varepsilon_2) \geq f_i(\bar{x})$ which shows that $\Bar{x} \in (\varepsilon_1 +2L\varepsilon_2)\argmin \{(x_{-i},-x_{-i},f_i(x))\; | \;x \in \bbx\}$. 
\end{proof} 

We are now ready to prove Theorem~\ref{sandwicheps}.

\begin{proof}[Proof of Theorem~\ref{sandwicheps}]
Let us first prove the left hand side inclusion. Let $x^* \in \NE(f,\bbx)$. By Theorem~\ref{thm:nash}, it follows that $x^* \in \bigcap_{i=1}^N \argmin \{(x_{-i},-x_{-i},f_i(x))\; | \; x \in \bbx \}$. Then, for each $i=1,...,N$ there is by Lemma~\ref{sandwich} a $j=j(i)$ such that $x^* \in \bar{X}^j_i$. This implies, by~\eqref{innerproj}, $x^* \in \conv \hat{X}^j_i+B_{\varepsilon_2}$. As 
$x^* \in \NE(f,\bbx)$ also implies $x^* \in \bbx$, this proves the claim by the definition of the sets  $X_i$ and $X$.
Let us now prove the right hand side inclusion. Let $x^* \in X$, thus $x^* \in X_i$ for all $i=1,...,N$.
By Lemma~\ref{epsbound} and Theorem~\ref{thm:nash-approx} the claim follows.
\end{proof}

Let us conclude the section with several remarks.

\begin{remark}\label{rem:dim_red}
Recall that Theorem~\ref{thm:nash} was given in~\cite{FR23} in a more general framework and in a different formulation using a non-standard ordering cone (see problem~(5) with ordering cone~(7) in~\cite[Cor.~2.8]{FR23}), which can be equivalently written as
\begin{equation}\label{eq:voplowdim}
\mathbb R_+ \Bar{c}_i-\min\{(x_{-i},f_i(x)) \; | \; x \in \bbx\},
\end{equation}
where one is minimizing just in direction $\Bar{c}_i=(0,...,0,1)^\top \in \mathbb R^{\sum_{j=1, j \neq i}^N n_j+1}$. That is, the ordering cone of problem~\eqref{eq:voplowdim} is not the natural ordering cone, but $\mathbb R_+ \Bar{c}_i=\{r\Bar{c}_i\; | \; r\in \mathbb R_+ \}$.

Thus, instead of problem~\eqref{eq:vop} one could also consider problem~\eqref{eq:voplowdim}, which is lower dimensional (one dimension lower than the dimension reduction obtained already by \eqref{eq:vop2}), but has a non-standard ordering cone. One can, however, not expect a significant run time reduction of the proposed algorithm for this lower dimensional problem as the scalar problems solved within the algorithms in step~1 and~3 of the Algorithm~\ref{alg1} are basically unaffected by this change. Step~2 would be in lower dimensions, but the run time bottle neck of Algorithm~\ref{alg1} are rather steps~1 and~3.
\end{remark}

\begin{remark} \label{rem:inconstr}
Theorem~\ref{sandwicheps} also holds if we replace Assumption~\ref{asspolycase} by the following condition, keeping Assumption~\ref{ass:convex} unchanged:
Assume each player $i$ has an independent constraint set $\bbx_i=\{x_i \in \mathcal{X}_i\; | \; g_i(x_i) \leq 0\}$, where $g_i: \mathcal{X}_i \to \mathbb R$ is a convex function and $\bbx_i$ has nonempty interior. Thus, the shared constraint set is $\bbx=\bbx_1 \times ... \times \bbx_N=\{x \in \prod_{j = 1}^N \xcal_j\; | \; g_1(x_1) \leq 0,...,g_N(x_N) \leq 0\}$.
Constraints of this type are also frequently considered in the literature: they are called `orthogonal constraint sets' in~\cite{rosen65} and `classical games' in~\cite{braouezec2021economic}.
When replacing the polyhedral constraint Assumption~\ref{asspolycase} by the independent constraint assumption $\bbx=\bbx_1 \times ... \times \bbx_N$, one needs to adapt step~$1$ of Algorithm~\ref{alg1} as follows
\begin{description}
\item[step 1:] Compute a polyhedral outer approximation $P^{Out}_{-i}$ of $\bbx_{-i}:=\bbx_1 \times ... \times \bbx_{i-1} \times \bbx_{i+1} \times ... \times \bbx_N$. Denote the vertices of $P^{Out}_{-i}$ by $p^1,...,p^{l_i}$. Consider the convex multi-objective optimization problem
\begin{align*}
    \min \{(x_{-i},-x_{-i},f_i(x))\; | \;x_i \in \bbx_i,\; x_{-i} \in P^{Out}_{-i} \}.
\end{align*}
Compute an inner approximation $\mathcal{P}^{In}_i$ of its upper image such that \eqref{innerappr} is satisfied with approximation level $\varepsilon_1>0$ using~\cite[Algorithm~1]{LRU14} with the modified initialization, similar to Section~\ref{subsec:mod}, but using
\begin{align*}
    \mathcal{P}_0&=\mathcal{P}_0 \cap \{y \in \mathbb R^{m_i}\; | \; y_{1:a_i}=-y_{a_i+1:m_i-1} \} \cap \{y \in \mathbb R^{m_i}\; | \; y_{1:a_i} \in \conv \{p^1_{-i},...,p^{l_i}_{-i} \} \}
    \\
    \Bar{\mathcal{X}}&=\Bar{\mathcal{X}} \cup \{(x_i,p^j)\; | \;j=1,...,{l_i}\},
\end{align*}
where $x_i \in \bbx_i$ is chosen arbitrarily.
\end{description}
The rest of Algorithm~\ref{alg1} remains unchanged. The set $X$ outputted by this modified Algorithm~\ref{alg1} satisfies
\begin{align*}
    \NE(f,\mathbb X) \subseteq X   \subseteq \epsilon\NE(f,\mathbb X)
\end{align*}
for $\epsilon=\varepsilon_1+2L \varepsilon_2$ for a game with an independent constraint set as above and satisfying Assumption~\ref{ass:convex}.
The proofs are in analogy to the proof of Theorem~\ref{sandwicheps}. The key is that the independent constraint assumption ($\bbx_{i}\times \bbx_{-i}=\bbx$) ensures that the first inequality in~\eqref{x} still holds by $x^* \in \argmin \{(x_{-i},-x_{-i},f_i(x))\; | \;x \in \mathbb{X}\}$ and since $\sum_{l=1}^s \hat{\lambda}^l {x}^l= (\sum_{l=1}^s \hat{\lambda}^l {x}^l_{i}, x^*_{-i})\in \bbx_{i}\times \bbx_{-i}=\bbx$ is feasible, as ${x}^l_i\in\bbx_i$ ($l=1,...,s$) even though ${x}^l\notin\bbx$ is possible due to the use of the outer approximation $P^{Out}_{-i}$ for $\bbx_{-i}$.

The approximation error of the outer approximation $P^{Out}_{-i}$ of $\bbx_{-i}$ does not enter as, in step~$3$ of the algorithm, the intersection with the constraint set $\bbx$ is taken.
\end{remark}

\begin{remark} 
Note however, that for a more general constraint set $\bbx$ than that considered in Assumption~\ref{asspolycase} or Remark~\ref{rem:inconstr}, one might have that $\bbx$ is a strict subset of $ \bbx_{i}\times \bbx_{-i}$, i.e. $\bbx\subsetneq \bbx_{i}\times \bbx_{-i}$, where we define  $\bbx_{-i}:=\{ x_{-i} \in \prod_{j \neq i} \mathcal{X}_j\; | \; \exists \; x_i \in \mathcal{X}_i: (x_i,x_{-i}) \in \bbx \}$ and $\bbx_i:=\{x_i \in \mathcal{X}_i\; | \;\exists \; x_{-i} \in \prod_{j \neq i} \mathcal{X}_j: (x_i,x_{-i}) \in \bbx\}$. Therefore the first inequality in~\eqref{x} might fail as $\sum_{l=1}^s \hat{\lambda}^l {x}^l\notin \bbx$ is possible.
\end{remark}

\section{Examples}\label{sec:ex}
We will start in Sections~\ref{sec:ill} and~\ref{sec:search} with simple illustrative two player examples where the set of Nash equilibria, as well as the set of $\epsilon$-approximate Nash equilibria, can be computed explicitly. As such we directly investigate the sandwich principle \eqref{eq:sandwichth}. In Section~\ref{sec:pollution}, we will consider more complex examples from environmental economics involving two or three players. There, the set of Nash equilibria or $\epsilon$-approximate Nash equilibria is in general not known and Algorithm~\ref{alg1}  is used to compute a set $X$ that by Theorem~\ref{sandwicheps} is known to lie between the set of true Nash equilibria and the set of $\epsilon$-approximate Nash equilibria.

\subsection{Illustrative examples}\label{sec:ill}

\begin{example}
\label{ex:1}
Consider a two player game, where each player chooses a real-valued strategy, thus $\mathcal{X}_1=\mathcal{X}_2=\mathbb R$. Let the shared constraint set be $\bbx=\{(x_1,x_2) \in \mathbb R^2\; | \; x_1,x_2 \in [-1,1] \}$. Clearly, the set $\bbx$ is a compact polyhedral set. Consider for player $1$ the convex cost function $f_1(x_1,x_2)=\frac{1}{2}x^2_1-x_1(x_2+\frac{1}{2})+x^2_2$ and for player $2$ the convex cost $f_2(x_1,x_2)=\frac{1}{2}x^2_2+x_1 x_2+x^2_1$. 
One can show that $f_1$ and $f_2$ are Lipschitz continuous with constant $L=3$ on $\bbx$. Thus, Assumptions~\ref{ass:convex} and~\ref{asspolycase} are satisfied.
Due to the simple structure and low dimension of this example it is possible to find the set of Nash equilibria easily by hand. It consists of the unique Nash equilibrium $(\frac{1}{4},-\frac{1}{4})$.  However, we will apply Algorithm~\ref{alg1} to illustrate the sandwich result~\eqref{eq:sandwichth} of Theorem~\ref{sandwicheps} and thus find an approximation of the unique Nash equilibrium. We choose error levels $\varepsilon_1=0.01, \varepsilon_2=0.001$. Hence, the overall error is $\epsilon=\varepsilon_1+2L \varepsilon_2=0.016$. Due to the simple nature of the problem one can compute the exact set of $\epsilon$-approximate Nash equilibria as
\begin{align*}
    \epsilon\NE(f,\bbx)=&\{(x_1,x_2) \in \mathbb R^2| x_1 \in [\frac{1}{4}-\sqrt{0.032},\frac{1}{4}], x_2 \in [-x_1-\sqrt{0.032},x_1-\frac{1}{2}+\sqrt{0.032}] \}\\
    &\cup \{(x_1,x_2) \in \mathbb R^2| x_1 \in [\frac{1}{4},\frac{1}{4}+\sqrt{0.032}], x_2 \in [x_1-\frac{1}{2}-\sqrt{0.032},-x_1+\sqrt{0.032}] \}.
\end{align*}
Algorithm~\ref{alg1} computes a set $X\subseteq \mathbb R^2$ satisfying $\NE(f,\bbx) \subseteq X \subseteq \epsilon\NE(f,\bbx)$ for $\epsilon=0.016$. The three sets are depicted in Figure~\ref{fig:figure1}. Furthermore,
Figure~\ref{fig:figure1} shows the sets $X_i$ of both players $i=1,2$ satisfying~\eqref{eq:sandwich2} for $\varepsilon_1$. Their intersection $X_1 \cap X_2$ is the desired set $X$ which fulfills~\eqref{eq:sandwichth} for $\epsilon=0.016$.

\begin{figure}[h]
		\centering
		\subfloat[][]{\includegraphics[width=0.4\linewidth]{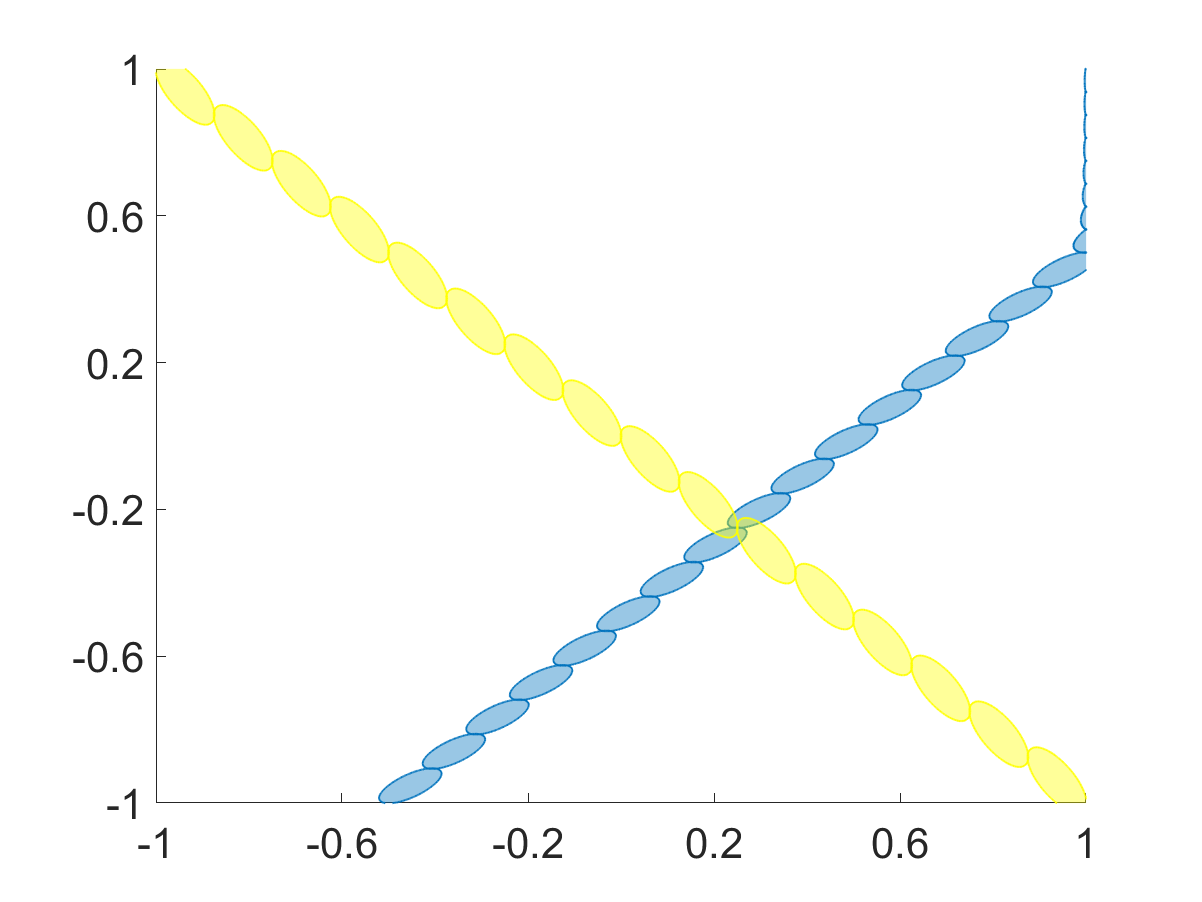}}
		\qquad
		\subfloat[][]{\includegraphics[width=0.4\linewidth]{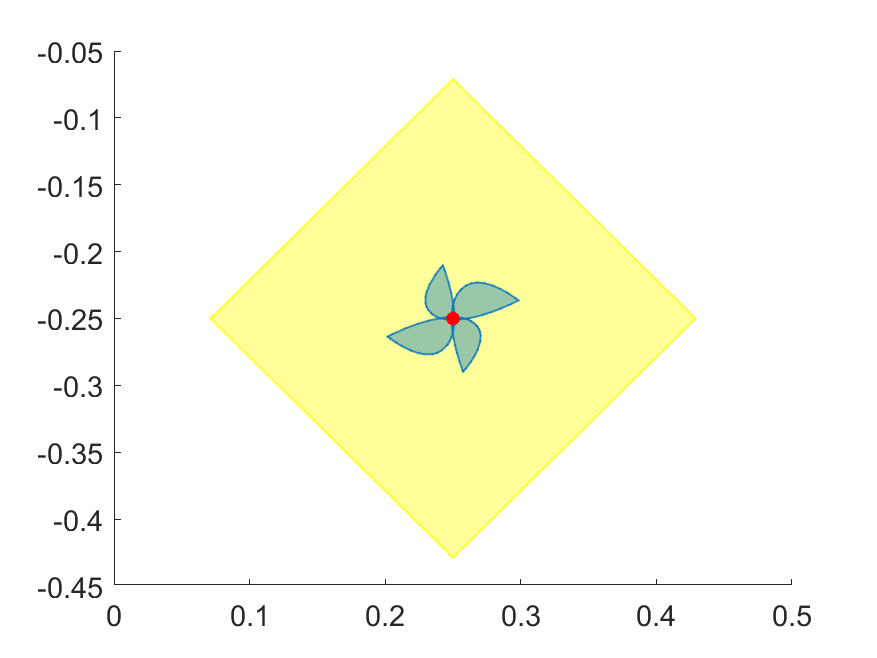}}
		\qquad
		\caption{(a): Computed sets $X_1$ for player $1$ (blue) and $X_2$ for player $2$ (yellow), (b): The \emph{singleton} set $\NE(f,\bbx)$ (red), the set $X$ computed by Algorithm~\ref{alg1} (blue) and the set $\epsilon\NE(f,\bbx)$ (yellow) for Example~\ref{ex:1}.} {\label{fig:figure1}}
	\end{figure}

\end{example}

\begin{example}
\label{ex:2}
Consider now the example provided within \cite[Section 2]{feinstein2022}. This is a parametric game which permits unique, multiple, and infinite Nash equilibria depending on the parameter chosen. 

Specifically, consider a two player game in which each player has real-valued strategies in the compact set $[0,1]$. Thus, the shared constraint set is a box-constraint $\bbx=\{(x_1,x_2) \in \mathbb R^2\; | \; x_1,x_2 \in [0,1] \}$. In order to introduce the convex cost functions, consider the compact parameter space $\mathcal{Y}:=[0,2]$. The cost functions for player $1$ and player $2$ are given by $f_1(x_1,x_2)=x_1(x_1-2yx_2)+x^2_2$ and $f_2(x_1,x_2)=(x_1-x_2)^2$ respectively, where $y \in \mathcal{Y}$. Note that the Lipschitz constant for this problem depends on the parameter $y$ is given by $L_y=\max \{4,2+2(y^2+10^{-4})\}$. For each $y \in [0,2]$, a Nash equilibrium exists and the set of Nash equilibria depending on $y$ is of the form
\begin{align*}
  \NE(f,\bbx)=  \begin{cases}
  \{(0,0)\}, \hspace{0.2cm} &y<1\\
  \{x \in [0,1]^2\; | \;x_1=x_2\}, \hspace{0.2cm} &y=1\\
  \{(0,0), (1,1)\}, \hspace{0.2cm} &y>1.
  \end{cases}
\end{align*}
For $y \in [0,1)$ we obtain a unique Nash equilibrium. If $y=1$ the set of Nash equilibria is a line. For $y \in (1,2]$ the set of Nash equilibria consists of two isolated points.

In order to demonstrate that our methods work independently of the set of Nash equilibria being a singleton, a finite set or an infinite set,
we apply Algorithm~\ref{alg1} for different values of $y$ given by $y \in \{0.5,1,1.5\}$. The chosen error levels are $\varepsilon_1=\varepsilon_2=0.001$. Figure~\ref{figure2} shows, for both players, the computed sets $X_1$ and $X_2$ satisfying~\eqref{eq:sandwich2}. The intersection $X_1 \cap X_2$ is the desired set $X$ satisfying~\eqref{eq:sandwichth} for $\epsilon=0.018$ if $y=0.5$ or $y=1$, and $\epsilon=0.027$ if $y=1.5$. It is worth mentioning that the computed set $X$ is much smaller than the set $\epsilon\NE(f,\bbx)$ in all three cases. This can be seen in detail in Figure~\ref{fig:ex2alltogether} for the case $y=1.5$.
\begin{figure}[h]
		\centering
		\subfloat[][]{\includegraphics[width=0.35\linewidth]{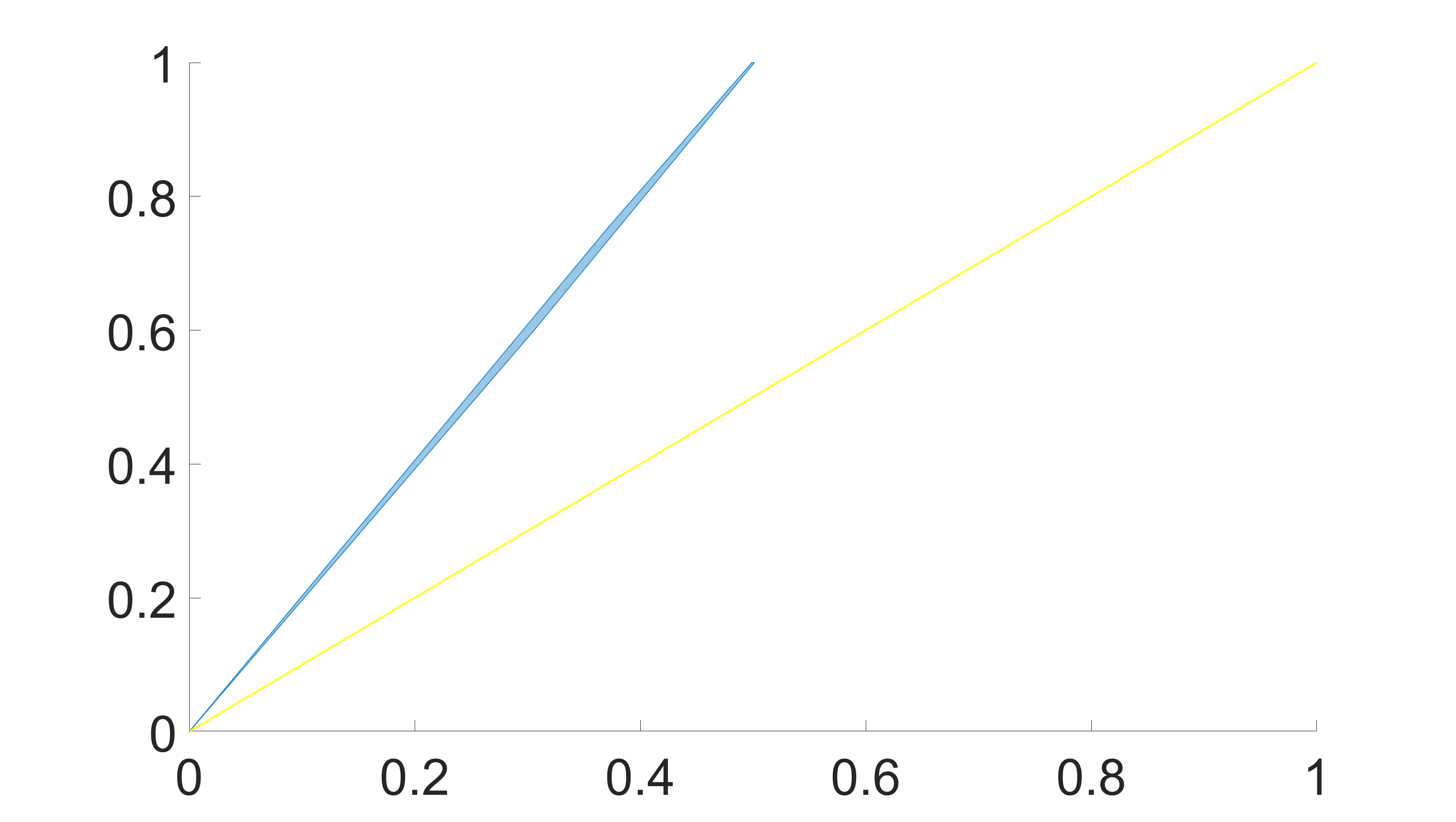}}
		\qquad
		\subfloat[][]{\includegraphics[width=0.35\linewidth]{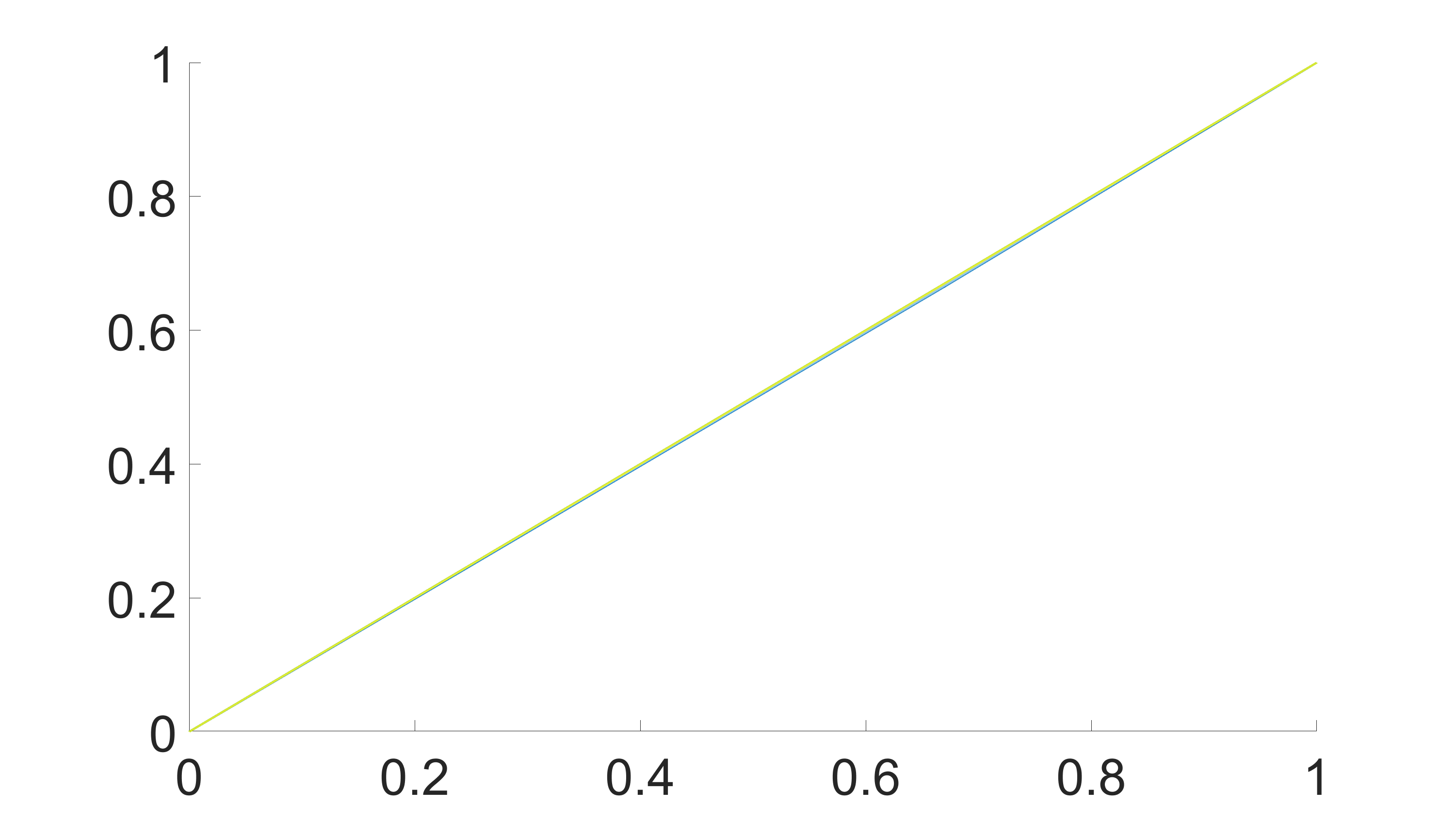}}
		\qquad
		\subfloat[][]{\includegraphics[width=0.35\linewidth]{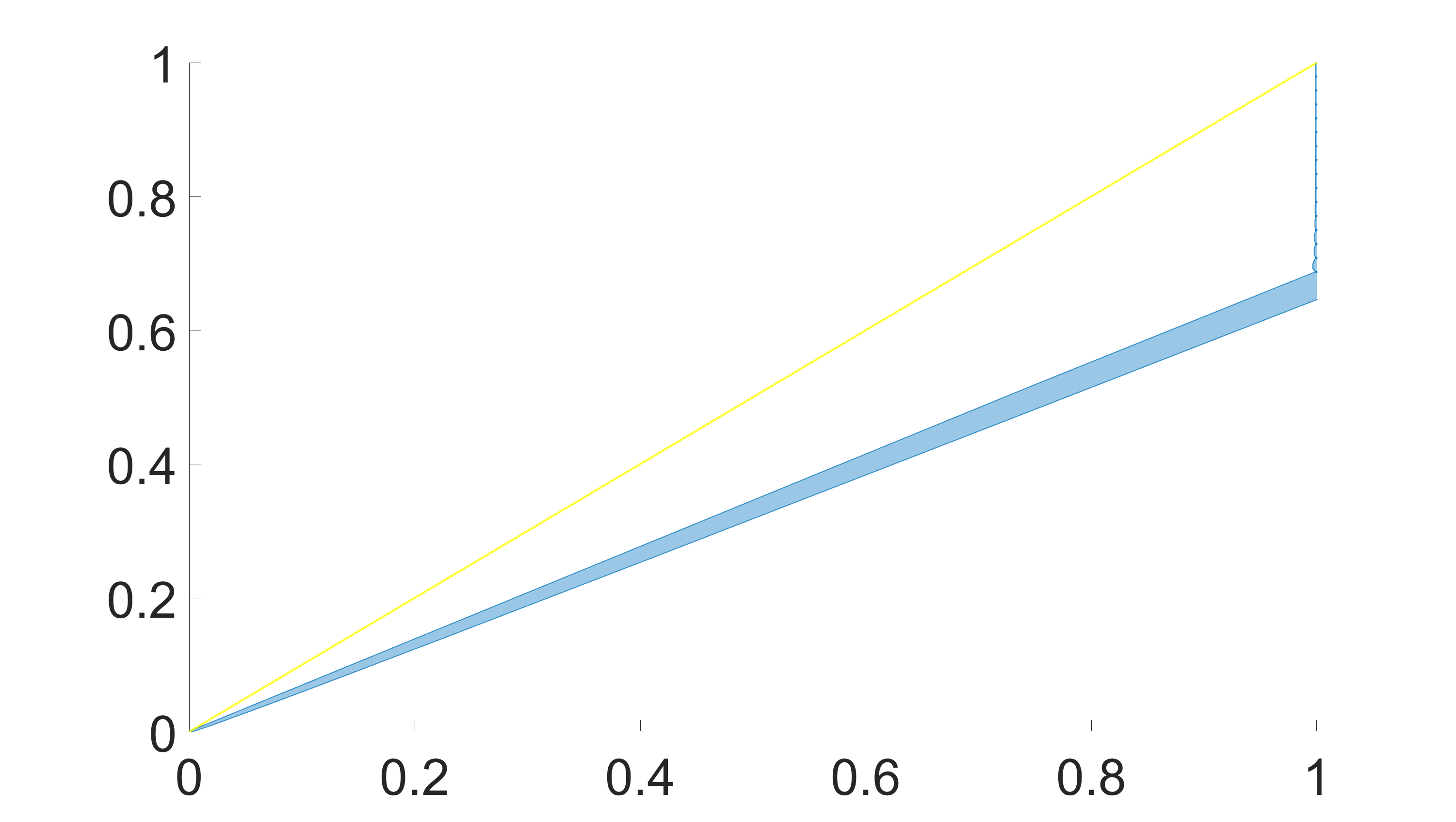}}
		\caption{Computed sets $X_1$ for player $1$ (blue) and $X_2$ for player $2$ (yellow) for (a) $y=0.5$, (b) $y=1$ and (c) $y=1.5$ in Example~\ref{ex:2}.} {\label{figure2}}
	\end{figure}

\begin{figure}[h]
		\centering
		   \includegraphics[width=0.4\linewidth]{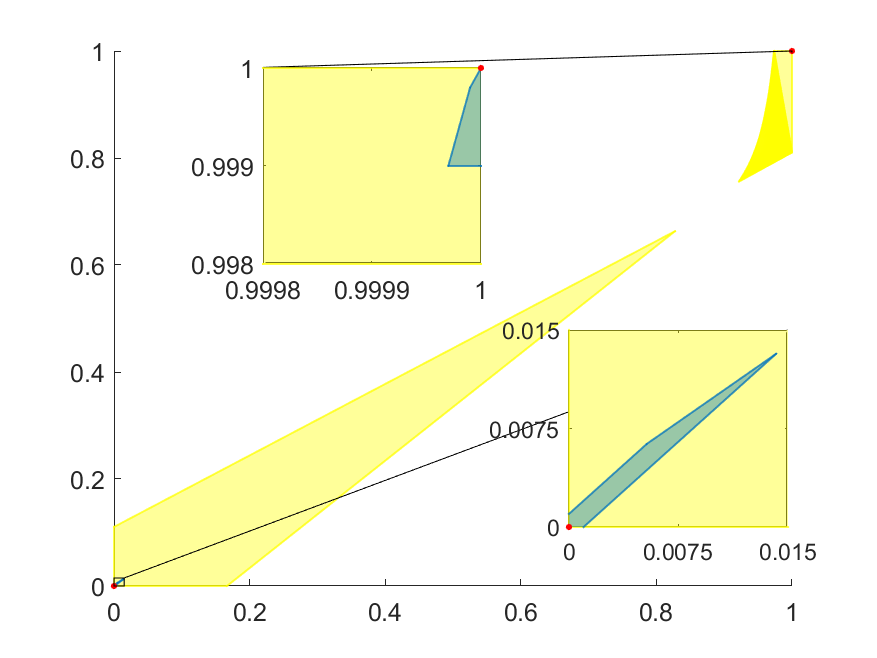}
		\caption{Example~\ref{ex:2}: the set $\epsilon\NE(f,\bbx)$ (yellow) as well as the computed set $X$ (blue) around the zoomed in regions of the true equilibria $(0,0)$ and $(1,1)$.} {\label{fig:ex2alltogether}}
	\end{figure}

\end{example}

\begin{example}
\label{ex:3}
Consider the example provided within Figure 2 of \cite{rosen65}. For this two player game, the convex cost functions are given by $f_1(x_1,x_2)=\frac{1}{2}x^2_1-x_1 x_2+x^2_2$ and $f_2(x_1,x_2)=x^2_2+x_1 x_2+x^2_1$ for $x_1,x_2\in\R$. A polyhedral shared constraint set is given by $\bbx=\{(x_1,x_2) \in \mathbb R^2\; | \; x_1+x_2 \geq 1, x_1 \geq 0, x_2 \geq 0\}$.  Note that the set $\bbx$ is not compact. However, a non-unique set of Nash equilibria for this game exists and is of the form $\NE(f,\bbx)=\{(x,1-x)\; | \;x \in [\frac{1}{2},1] \}$. 

In order to apply Algorithm~\ref{alg1}, we consider the compact subset $\bar{\bbx}=\bbx \cap [0,2]^2$. Since $\NE(f,\bbx) \subseteq \bar{\bbx}$, there is no loss of information by using this smaller constraint set for the computations. The Lipschitz constant of the joint cost function $(f_1,f_2)$ over $\Bar{\bbx}$ is $L=8$. We apply Algorithm~\ref{alg1} for chosen error levels $\varepsilon_1=0.01, \varepsilon_2=0.001$. Thus, the outputted set $X \subseteq \mathbb R^2$ satisfies~\eqref{eq:sandwichth} for $\epsilon=0.027$. Figure~\ref{fig:figure4} (a) shows the computed sets $X_1$ and $X_2$ satisfying~\eqref{eq:sandwich2} for both players and its intersection $X$. 
The exact set of $\epsilon$-approximate Nash equilibria for this example is given by
\begin{align*}
	\epsilon\NE(f,\bbx)&=\{(x_1,x_2) \in \R^2_+ \; | \; x_1 + x_2 \geq 1 , \; x_1 \in [x_2 - \sqrt{x_2^2 + 2(v_1(x_2) + \epsilon)}, \\
	&\quad x_2 + \sqrt{x_2^2 + 2(v_1(x_2) + \epsilon)}] , \; x_2 \in [1 - x_1 , \frac{1}{2}(-x_1 + \sqrt{x_1^2 + 4(v_2(x_1) + \epsilon)}\}
	\end{align*}
	 for $v_1(x_2) = \frac{1}{2}(x_2 \vee (1-x_2))^2 - (x_2 \vee (1-x_2))x_2$ and $v_2(x_1) = (1-x_1)^+$. Figure~\ref{fig:figure4} (b) shows the three sets of the sandwich result $\NE(f,\bbx) \subseteq X \subseteq \epsilon\NE(f,\bbx)$. Similar to before, the computed set $X$ is much smaller than the set $\epsilon\NE(f,\bbx)$ and approximates the set of true Nash equilibria $\NE(f,\bbx)$ quite well.

\begin{figure}[h]
		\centering
		\subfloat[][]{\includegraphics[width=0.4\linewidth]{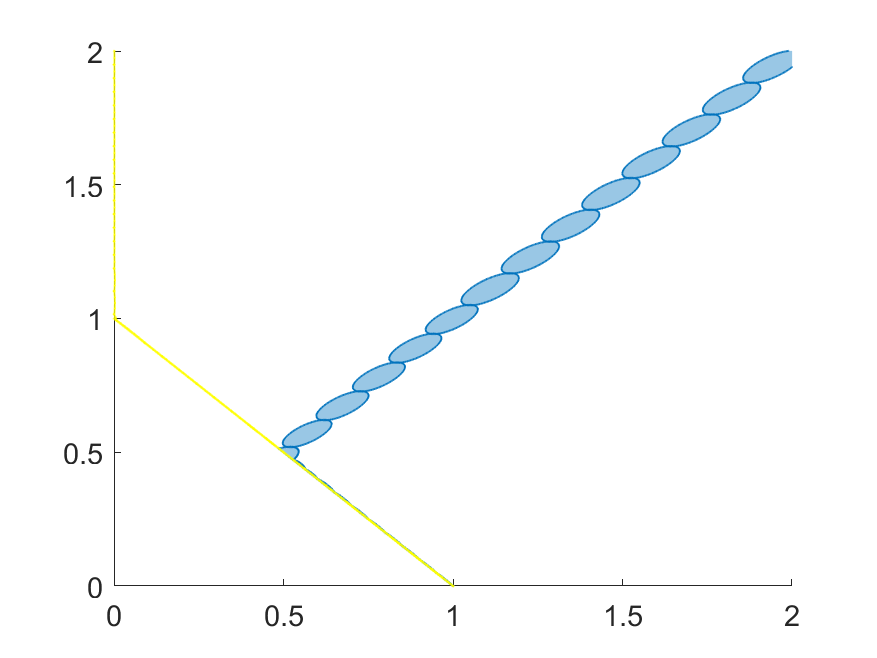}}
		\qquad
		\subfloat[][]{\includegraphics[width=0.4\linewidth]{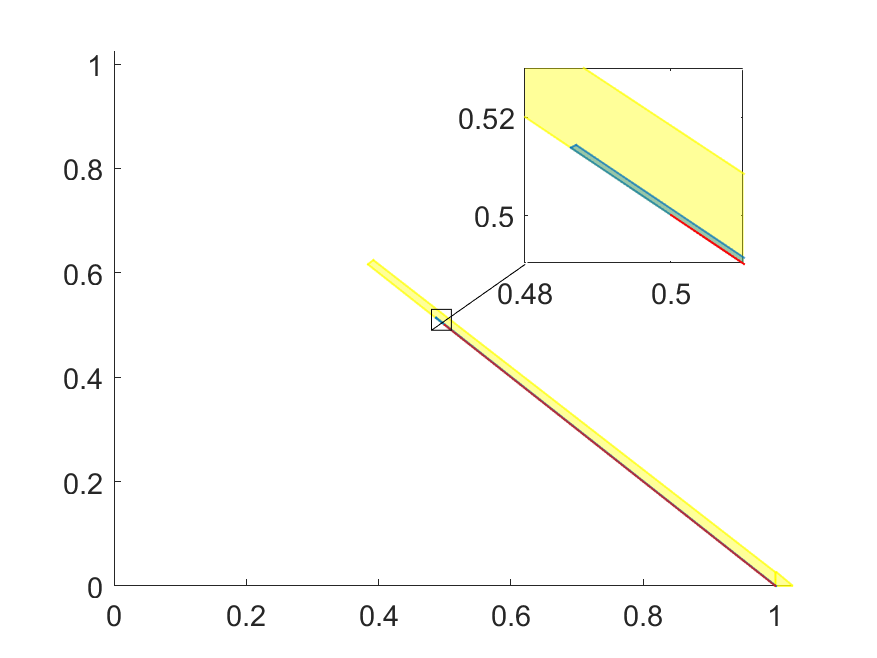}}
		\qquad
		\caption{Example~\ref{ex:3}: (a) Computed sets $X_1$ for player 1 (blue) and $X_2$ for player 2 (yellow). (b):  the set $\epsilon\NE(f,\bbx)$ (yellow), the computed set $X$ (blue) and the set $\NE(f,\bbx)$ (red) as well as a zoomed in region. } {\label{fig:figure4}}
	\end{figure}

\end{example}

\subsection{Search model\label{sec:search}}
Consider the $N$-player search model from~\cite{diamond1981mobility} and as described in~\cite[Section 3.1]{vives2005complementarities}.
In this game, player $i$ spends effort $x_i$ (chosen from the space $\xcal_i = \R$) to find a partner for a transaction; each player is constrained so as to only allow for nonnegative efforts $\bbx \subseteq \R^N_+$. The cost of this effort is provided by a smooth and strictly convex function $C: \R_+ \to \R_+$ that depends only on the action of the player, but is homogenous for all players. The expected payout is proportional to the effort put in by the player and some function $g: \R_+ \to \R_+$ of the aggregate effort of all other players.
Each player seeks to minimize her net costs $f_i(x) := C(x_i) - x_i g(\sum_{j \neq i} x_j)$.
\begin{remark}\label{rem:search}
In~\cite[Section 3.1]{vives2005complementarities}, no constraints are imposed on this game. Herein, so as to satisfy Assumption~\ref{ass:convex}, we impose the constraint set $\bbx = [0,1]^N$. In addition, to guarantee joint convexity of the cost function, we consider the modified cost functions $\hat f_i(x) = f_i(x) + \beta \sum_{j \neq i} x_j^2$ with $\beta > 0$ large enough.
\end{remark}

\begin{example}\label{ex:search}
Consider the case of two players. Assume a cubic cost function $C(z) := z^3$ and linear payout $g(z) := 0.5z$. It is trivial to verify that there exist two Nash equilibria for this problem $\{(0,0) \, , \, (1/6 , 1/6)\}$. To satisfy the conditions of Assumption~\ref{ass:convex} we include the two modifications to this original problem as outlined in Remark~\ref{rem:search} with $\beta = 2$. We note that the set of Nash equilibria is unimpacted by these modifications. 
The Lipschitz constant of the cost functions over $\bbx$ is $L = 7$. We apply Algorithm \ref{alg1} with error levels $\varepsilon_1 = \varepsilon_2 = 0.01$ to compute a set $X$ which satisfies~\eqref{eq:sandwichth} for $\epsilon = 0.15$. This set $X$, along with the Nash equilibria and $\epsilon$-Nash equilibria, are displayed in Figure~\ref{figure6}. 
We highlight that the computed set $X$ is comprised of distinct polyhedrons satisfying~\eqref{eq:sandwichth}.
\begin{figure}[h]
    \centering
    \includegraphics[width=0.4\linewidth]{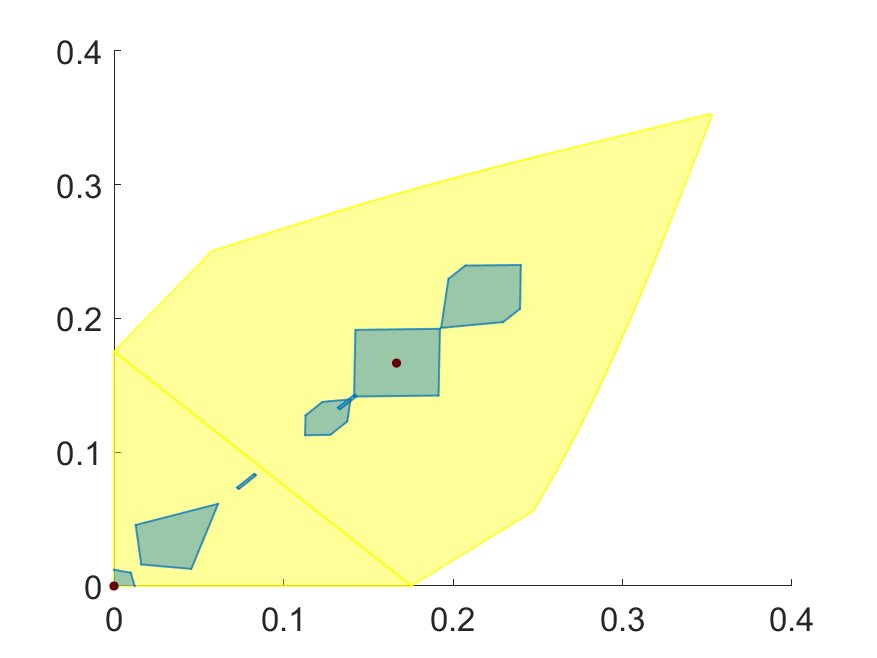}
    \caption{Example~\ref{ex:search}: the set $\epsilon\NE(f,\bbx)$ (yellow), the computed set $X$ (blue) and the set $\NE(f,\bbx)$ (red).}{\label{figure6}}
\end{figure}
\end{example}

\subsection{Pollution control game}\label{sec:pollution}
Let us consider the $N$-player pollution control game from environmental economics considered in~\cite{TZ2005}. There, each player represents a country that is, on one hand, seeking to maximing the net revenue from production of goods and services and, on the other hand, minimizing the environmental damage due to pollution. Pollution is assumed to be a proportional by-product of production and therefore the net revenues (gross revenue minus production cost) of each player, e.g., it can be expressed as a function of emissions. The strategy $x_i$ of country $i$, which is chosen from the space $\mathcal{X}_i = \R$, can be seen as the emissions of country $i$. Each country has its own revenue function $g_i: \mathcal{X}_i \to \mathbb R_+$ which is nonnegative and concave and depends only on the emissions of country $i$. However, the environmental damage due to emissions depends on the sum of all combined emissions which is stated by a convex damage cost function $d$. 
Each country tries to minimize its cost function $f_i$ (i.e.\ to maximize her total welfare given as the difference between the net revenue and the damage cost), 
 which is given by $f_i(x):=d(x_1+...+x_N)-g_i(x_i)$.
 Additionally, two types of constraints can be considered. Firstly, each country $i$ can face an environmental constraint $x_i \in [0,T_i]$ where $T_i \geq 0$ is an exogenously given upper bound on emissions, e.g.\ from an international treaty. Secondly, countries can agree to jointly fix an upper bound for emissions, which is stated by a polyhedral constraint of the form $\sum_{i=1}^N \alpha_i x_i \leq T$, where $\alpha_i \in \mathbb R_+$ for $i=1,...,N,$ are positive coefficients and $T>0$ is an upper bound. In~\cite{TZ2005}, the assumption of a unique Nash equilibrium is made and only those pollution control games are considered in their study. In contrast, with help of Theorem~\ref{thm:nash} (see also \cite[Th.~2.6]{FR23}) we can characterize the set of Nash equilibria as the set of all Pareto solutions of a certain vector optimization problem, regardless of uniqueness or finiteness of the set of equilibria. Furthermore, with Algorithm~\ref{alg1}, we are able to numerically approximate the set of Nash equilibria.
 In the following, we present examples for games with $2$ or $3$ players, and where the set of Nash equilibria is nontrivial.

\begin{example}
\label{ex:4}
Let us start with the case of two players. Assume each player has a linear revenue function given by $g_i(x_i)=\beta_i x_i$ with values $\beta_1=1.1$ and $\beta_2=2$. Consider a convex damage function $d(x)=\frac{1}{2}(x_1+x_2)^2$. Each player tries to minimize its cost $f_i(x)=d(x)-\beta_i x_i$. We assume that each player has its own box-constrained restrictions as well as a joint upper bound for the sum of their strategies (emissions). This leads to a compact polyhedral shared constraint set which is given by $\bbx=\{(x_1,x_2) \in [0,1]^2| x_1+0.4 x_2 \leq 1\}$. The Lipschitz constant of the cost functions over $\bbx$ is $L=2.1$. We apply Algorithm~\ref{alg1} with chosen error level $\epsilon_1=\epsilon_2=0.01$. Then the set $X$ computed by Algorithm~\ref{alg1} satisfies \eqref{eq:sandwich} for $\epsilon=0.052$. Figure~\ref{figure4} shows the computed sets $X_1$ and $X_2$ satisfying~\eqref{eq:sandwich2} for both players. The intersection is the set $X$. It contains a line segment as well as an isolated point area which comprise the set of Nash equilibria $\{(\frac{1}{10},1)\} \cup \{(x_1,\frac{5}{2}[1-x_1]) \; | \; x_1 \in [\frac{14}{15},1]\}$.

\begin{figure}[h]
		\centering
		{\includegraphics[width=0.4\linewidth]{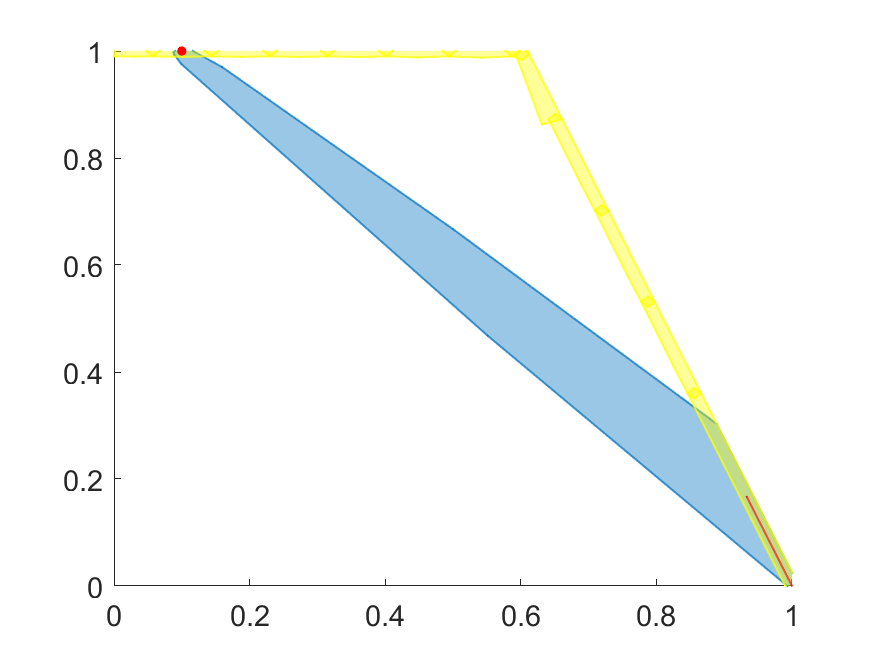}}
		\caption{Example~\ref{ex:4}: Computed sets $X_1$ for player 1 (blue) and $X_2$ for player 2 (yellow) as well as the true set $\NE(f,\bbx)$ (red). The intersection for both players yields the set $X$ that satisfies $\NE(f,\bbx) \subseteq X \subseteq \epsilon\NE(f,\bbx)$ for $\epsilon=0.052$.} {\label{figure4}}
	\end{figure}
\end{example}

\begin{example}
\label{ex:5}
We now extend this pollution game to include a third player. As in the two player case above, each player has a linear revenue function of the form $g_i(x_i)=\beta_i x_i$ where we set the values $\beta_1=1.1, \beta_2=1.3$ and $\beta_3=3.2$. Furthermore, we define a convex quadratic environmental damage function $d(x)=\frac{1}{2}(x_1+x_2+x_3)^2$. Each player tries to minimize its cost function $f_i(x)=d(x)-\beta_i x_i$.  Let the constraint set for this game be polyhedral with $\bbx=\{(x_1,x_2,x_3) \in [0,1]^3| x_1+0.6 x_2+0.4 x_3 \leq 1\}$. The Lipschitz constant of the cost over $\bbx$ takes the value $L=9.8$. We apply Algorithm~\ref{alg1} with error levels $\varepsilon_1=\varepsilon_2=0.01$ to compute a set $X$ which satisfies~\eqref{eq:sandwichth} for $\epsilon=0.1325$. This set $X$ is visualized in Figure~\ref{figure5}. Similar to the two player example we obtain a line segment unified with an isolated area.

\begin{figure}[h]
		\centering
		{\includegraphics[width=0.4\linewidth]{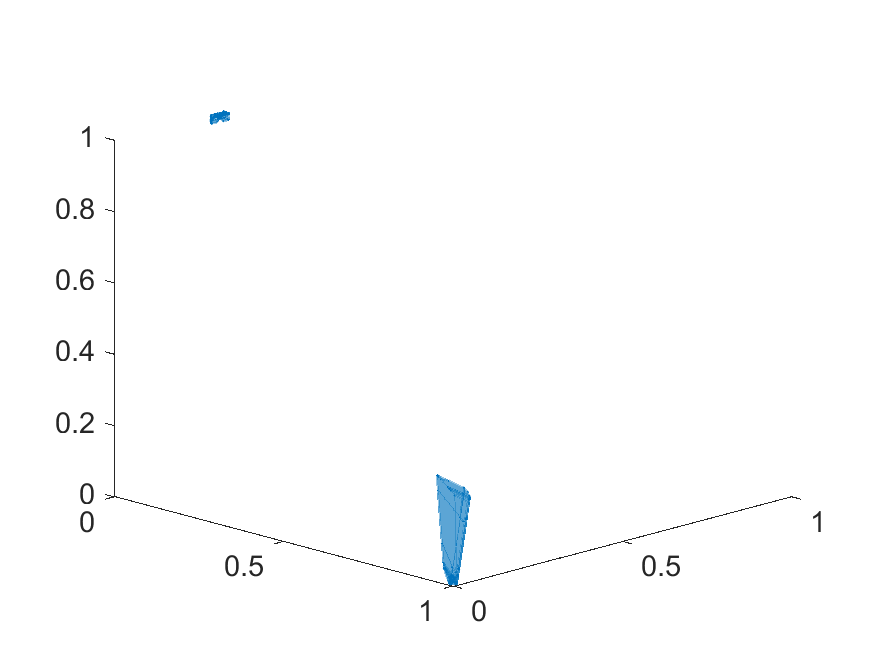}}
		\caption{Example~\ref{ex:5}: Set $X$ satisfying $\NE(f,\bbx) \subseteq X \subseteq \epsilon\NE(f,\bbx)$ for $ \epsilon=0.1325$.} {\label{figure5}}
	\end{figure}

\end{example}

\bibliographystyle{plain}
\bibliography{biblio-Nash}

\begin{thebibliography}{10}

\bibitem{Armand93}
Paul Armand.
\newblock Finding all maximal efficient faces in multiobjective linear
  programming.
\newblock {\em Mathematical Programming}, 61:357--375, 1993.

\bibitem{braouezec2021economic}
Yann Braouezec and Keyvan Kiani.
\newblock Economic foundations of generalized games with shared constraint: Do
  binding agreements lead to less {Nash} equilibria?
\newblock {\em European Journal of Operational Research}, 308(1):467--47, 2023.

\bibitem{diamond1981mobility}
Peter~A Diamond.
\newblock Mobility costs, frictional unemployment, and efficiency.
\newblock {\em Journal of political Economy}, 89(4):798--812, 1981.

\bibitem{facchinei2007generalized}
Francisco Facchinei, Andreas Fischer, and Veronica Piccialli.
\newblock On generalized nash games and variational inequalities.
\newblock {\em Operations Research Letters}, 35(2):159--164, 2007.

\bibitem{feinstein2022}
Zachary Feinstein.
\newblock Continuity and sensitivity analysis of parameterized nash games.
\newblock {\em Economic Theory Bulletin}, 10(2):233--249, 2022.

\bibitem{FR23}
Zachary Feinstein and Birgit Rudloff.
\newblock Characterizing and computing the set of {Nash} equilibria via vector
  optimization.
\newblock {\em Operations Research}, Forthcoming, 2023.

\bibitem{GJN06}
C{\'e}sar Guti{\'e}rrez, Bienvenido Jim{\'e}nez, and Vicente Novo.
\newblock On approximate solutions in vector optimization problems via
  scalarization.
\newblock {\em Computational Optimization and Applications}, 35(3):305--324,
  2006.

\bibitem{JRT2012}
Matthew~O. Jackson, Tomas Rodriguez-Barraquer, and Xu~Tan.
\newblock Epsilon-equilibria of perturbed games.
\newblock {\em Games and Economic Behavior}, 75(1):198--216, 2012.

\bibitem{Jahn11}
Johannes Jahn.
\newblock {\em Vector Optimization - Theory, Applications, and Extensions}.
\newblock Springer Science + Business Media, Berlin Heidelberg, second edition,
  2011.

\bibitem{kr22}
Gabriela {Kov{\'a}{\v{c}}ov{\'a}} and Birgit {Rudloff}.
\newblock Convex projection and convex multi-objective optimization.
\newblock {\em Journal of Global Optimization}, 83:301--327, 2022.

\bibitem{kr23}
Gabriela {Kov{\'a}{\v{c}}ov{\'a}} and Birgit {Rudloff}.
\newblock Approximations of unbounded convex projections and unbounded convex
  sets.
\newblock {\em Working Paper}, 2023.

\bibitem{Kulkarni2017}
Ankur~A. Kulkarni.
\newblock Games and teams with shared constraints.
\newblock {\em Philosophical Transactions of the Royal Society}, 375(2100),
  2017.

\bibitem{K79}
Semen~Samsonovich Kutateladze.
\newblock Convex $\epsilon$-programming.
\newblock {\em Soviet Math. Dokl}, 20(2):391--393, 1979.

\bibitem{LRU14}
Andreas L{\"o}hne, Birgit Rudloff, and Firdevs Ulus.
\newblock Primal and dual approximation algorithms for convex vector
  optimization problems.
\newblock {\em Journal of Global Optimization}, 60(4):713--736, 2014.

\bibitem{LZS21}
Andreas L{\"o}hne, Fangyuan Zhao, and Lizhen Shao.
\newblock On the approximation error for approximating convex bodies using
  multiobjective optimization.
\newblock {\em Applied Set-Valued Analysis and Optimization}, 3(3):341--354,
  2021.

\bibitem{nabetani2011parametrized}
Koichi Nabetani, Paul Tseng, and Masao Fukushima.
\newblock Parametrized variational inequality approaches to generalized nash
  equilibrium problems with shared constraints.
\newblock {\em Computational Optimization and Applications}, 48(3):423--452,
  2011.

\bibitem{nash1950}
John Nash.
\newblock Equilibrium points in $n$-person games.
\newblock {\em Proceedings of the National Academy of Sciences}, 36(1):48--49,
  1950.

\bibitem{nash1951}
John Nash.
\newblock Non-cooperative games.
\newblock {\em Annals of Mathematics}, pages 286--295, 1951.

\bibitem{nikaido1955note}
Hukukane Nikaid{\^o} and Kazuo Isoda.
\newblock Note on non-cooperative convex game.
\newblock {\em Pacific Journal of Mathematics}, 5(5):807--815, 1955.

\bibitem{nisan2007algorithmic}
Noam Nisan, Tim Roughgarden, \'{E}va Tardos, and Vijay~V. Vazirani.
\newblock {\em Algorithmic Game Theory}.
\newblock Cambridge University Press, 2007.

\bibitem{rosen65}
Judah~Ben Rosen.
\newblock Existence and uniqueness of equilibrium points for concave n-person
  games.
\newblock {\em Econometrica}, 33(3):520--534, 1965.

\bibitem{SZC18}
Lizhen Shao, Fangyuan Zhao, and Yuhao Cong.
\newblock Approximation of convex bodies by multiple objective optimization and
  an application in reachable sets.
\newblock {\em Optimization}, 67(6):783--796, 2018.

\bibitem{TZ2005}
Mabel Tidball and Georges Zaccour.
\newblock An environmental game with coupling constraints.
\newblock {\em Environmental Modeling \& Assessment}, 10(2):153--158, 2005.

\bibitem{tohidi2018adjacency}
Ghasem Tohidi and Hamid Hassasi.
\newblock Adjacency-based local top-down search method for finding maximal
  efficient faces in multiple objective linear programming.
\newblock {\em Naval Research Logistics (NRL)}, 65(3):203--217, 2018.

\bibitem{VanTu17}
Ta~Van~Tu.
\newblock A new method for determining all maximal efficient faces in multiple
  objective linear programming.
\newblock {\em Acta Math Vietnam}, 42:1--25, 2017.

\bibitem{vives2005complementarities}
Xavier Vives.
\newblock Complementarities and games: New developments.
\newblock {\em Journal of Economic Literature}, 43(2):437--479, 2005.

\end{thebibliography}

\end{document}